%% file: main.tex
\documentclass[12pt]{amsart}
\usepackage{amsmath}
\usepackage{amsfonts}
\usepackage{amssymb}
\usepackage{imakeidx}
\makeindex
\usepackage[colorlinks=true, allcolors=blue]{hyperref}
\usepackage{amsthm}
\usepackage{amscd}
\usepackage[T1]{fontenc}
\usepackage{comment}
\usepackage{appendix, pdfpages}
\usepackage{graphicx}
\usepackage{pdfpages}
\usepackage{subcaption}
\usepackage{yfonts}
\usepackage{epic}
\usepackage{xifthen}
\usepackage{pdfpages}
\usepackage{transparent}
\usepackage{import}
\newtheorem{theorem}{Theorem}[section]
\newtheorem{lemma}[theorem]{Lemma}
\newtheorem{proposition}[theorem]{Proposition}
\newtheorem{corollary}[theorem]{Corollary}
\newtheorem{definition}[theorem]{Definition}
\theoremstyle{remark}
\newtheorem{remark}[theorem]{Remark}

\title[EXOTIC 4-MANIFOLDS WITH $D_\infty$ FUNDAMENTAL GROUP]{EXOTIC STRUCTURES ON 4-MANIFOLDS WITH INFINITE DIHEDRAL FUNDAMENTAL GROUP}
\author{SIMONE TAGLIENTE$^{1,2}$}

\def\Z{\mathbb{Z}}

\DeclareMathOperator{\CP}{\mathbb{CP}}

\begin{document}

\maketitle

\begin{abstract}
The aim of this paper is to produce infinite exotic structures on smooth closed oriented $4-$manifolds with fundamental group isomorphic to the infinite dihedral group, assuming that $b_2^+$ and $b_2^-$ are at least $12$.
\end{abstract}

\section{Introduction}

Constructing exotic structures on closed $4$-manifolds is one of the main goals of modern research in low-dimensional topology.\\
We say that a topological manifold $M$ admits infinitely many exotic structures if there exists a family $\{M_n\}$ of smooth manifolds homeomorphic to $M$, but pairwise non-diffeomorphic. In this paper, we will focus on manifolds with fundamental group isomorphic to the infinite dihedral group, which is the non-trivial semidirect product of $\Z$ and $\Z_2$, and we will use the recent classification result of these manifolds up to homeomorphism \cite{hillman2025homotopy}.\\
From now on, all manifolds are assumed to be smooth.

\begin{theorem}\label{Teo1}
  Fix positive integers $a,b$ with $a,b\geq 12$. Then there exists an infinite family of homeomorphic but pairwise not diffeomorphic closed manifolds with fundamental group isomorphic to the infinite dihedral group, $b_2^+=a$, and $b_2^-=b$.\\
  When $a=b$, the families consist of irreducible manifolds.
\end{theorem}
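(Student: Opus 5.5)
The plan is to realize $D_\infty\cong \Z_2 * \Z_2$ as a connected sum. Since $\pi_1(X\# Y)=\pi_1(X)*\pi_1(Y)$, it suffices to produce closed oriented $4$-manifolds with fundamental group $\Z_2$ and prescribed $b_2^\pm$, and to take connected sums of two of them. The homeomorphism type will be controlled by the classification of \cite{hillman2025homotopy}. Exotica will be detected by Seiberg--Witten invariants, computed on a suitable cover or by gluing.

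\textbf{Building blocks.} First fix a standard smooth model $Q$ with $\pi_1(Q)=\Z_2$. For instance, take the quotient of $S^2\times S^2$ by the free involution $(x,y)\mapsto(-x,-y)$, or the rational homology sphere obtained by spinning $\mathbb{RP}^3$. Next take a family $\{E_n\}$ of closed $4$-manifolds with $\pi_1=\Z_2$ that are pairwise homeomorphic but have distinct Seiberg--Witten invariants. The natural source is free $\Z_2$-quotients of elliptic surfaces, such as Enriques-type surfaces, modified by logarithmic transforms or Fintushel--Stern knot surgery along a torus $T$ whose complement has $\pi_1$ still $\Z_2$. The knot surgery formula $SW_{E_K}=SW_E\cdot\Delta_K(t^2)$, applied on the double cover, distinguishes the $E_{K_n}$ for knots with distinct Alexander polynomials. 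Then blow up and add copies of $S^2\times S^2$ or $\CP^2$ to reach the required $(b_2^+,b_2^-)$. The bound $a,b\ge 12$ comes from needing enough room for an Enriques-type block, or two $\Z_2$ pieces with symplectic structure, plus the adjustment summands.

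\textbf{Homeomorphism.} Set $M_n=E_n\#Q'$, where $Q'$ is a $\Z_2$-manifold chosen to fix the parity of the intersection form and the $w_2$-type. Then verify that all $M_n$ have the same invariants in Hillman's classification: $\pi_1=D_\infty$, the same equivariant intersection form on $\pi_2$ (or the stable invariants the classification requires), the same $w_2$-type, and the same Kirby--Siebenmann invariant, which vanishes since everything is smooth. Because the $E_n$ are homeomorphic to one another, these invariants are equal for all $n$.

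\textbf{Non-diffeomorphism (the main obstacle).} Ordinary connected sums with $b_2^+>0$ on both sides kill Seiberg--Witten invariants, so one summand, $Q$, should be a rational homology sphere with $b_2^+=0$, and all of $b_2^\pm$ should be placed in $E_n$. Pass to the universal cover $\widetilde{M_n}$. It is a connected sum of infinitely many copies of the double covers, so its invariants are not directly usable. Instead I would use a finite cover: the index-$2$ subgroup $\Z\subset D_\infty$ gives a cover $\widehat M_n\simeq \widetilde{E_n}\# \widetilde{E_n}'\#(S^1\times S^3)$-type pieces. Diffeomorphisms of $M_n$ lift to this characteristic cover, and an adjunction or relative Seiberg--Witten argument, for example Bauer--Furuta stable cohomotopy invariants, which survive connected sums, can separate the $E_n$.

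Simpler still is to apply SW directly to $M_n=E_n\# Q$. Since $b_2^+(Q)=0$ and $b_1(Q)=0$, the invariants of $E_n$ persist, possibly up to a factor, by the connected-sum-with-negative-definite formula. When $a=b$ I expect the construction to avoid any $\CP^2$ or $\overline{\CP}^2$ summand. Irreducibility would then follow from nonvanishing SW invariants together with the absence of $(-1)$-spheres, that is, minimality.
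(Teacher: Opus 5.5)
Your proposal has genuine gaps, and they come from realizing $D_\infty\cong\Z_2*\Z_2$ as an honest connected sum $M_n=E_n\#Q$.

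\textbf{Detecting exotica.} Since $b_1(M_n)=0$, every Seiberg--Witten invariant of $M_n$ vanishes when $a=b_2^+(M_n)$ is even, because nonvanishing requires $1-b_1+b_2^+$ even. If $b$ is even too (e.g.\ $a=b=12$), reversing orientation does not help. So applying SW directly to $M_n$ covers only part of the range.

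The cover you fall back on cannot work. The double cover of $E_n\#Q$ for the index-two subgroup $\Z\subset D_\infty$ is $\widetilde{E}_n\#\widetilde{Q}\#(S^1\times S^3)$, not a sum of two copies of $\widetilde{E}_n$. For any closed $Q$ with $\pi_1(Q)=\Z_2$ one computes $b_2^+(\widetilde{Q})=1+2b_2^+(Q)\geq 1$. This cover is therefore a connected sum of two pieces with $b_2^+\geq 1$, and its SW invariants vanish identically. Bauer--Furuta does not rescue it for your choice $\widetilde{Q}=S^2\times S^2$, which has positive scalar curvature and $b_2^+>0$.

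Your adjustment step has the same defect. Summing with $\CP^2$ or $S^2\times S^2$ kills the SW invariants of $M_n$ and of all its finite covers. Only $\overline{\CP}^2$ summands and orientation reversal are harmless, so the exotic $\Z_2$-block would itself have to realize the required $b_2^+$. No such families are exhibited, and the bound $12$ is guessed rather than derived.

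The paper avoids all of this because its $\Z$-cover $W$ is not a connected sum. It is a normal connected sum of two copies of the simply connected symplectic $Z_m$ along $S_1$ and $S_2$. Hence $W$ is symplectic with $b_2^+(W)>1$. Equivariant knot surgery in both blocks then changes $SW_W$ by Alexander-polynomial factors, whatever the parity of $b_2^+(Y)$.

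\textbf{Irreducibility.} For $a=b$ the theorem asserts irreducibility, but $E_n\#Q$ is reducible by construction: neither summand is a homotopy sphere, since each has $\pi_1=\Z_2$. The inference ``nonzero SW and no $(-1)$-spheres implies irreducible'' is false once $\pi_1\neq 1$. Sums $X\#Q$ with $X$ symplectic and $Q$ a rational homology sphere with nontrivial $\pi_1$ have nonzero SW invariants (the Kotschick--Morgan--Taubes construction), yet they visibly split off $Q$.

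In the paper, $D_\infty$ arises as the fundamental group of the quotient $W/\sigma$ by a free involution, not as a free product of summands. Irreducibility is proved by lifting an embedded $S^3$ to the universal cover. That cover is exhausted by simply connected normal connected sums of copies of $Z_{m,J}$, which are minimal symplectic by Usher and hence irreducible by Hamilton--Kotschick.

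In your approach, homeomorphism would indeed be automatic once the $E_n$ are homeomorphic. In the paper this is the substantial step: it needs the $D_\infty$ classification, together with control of the equivariant intersection form and of the $k$-invariant via a degree-one map.
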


During the proof, we will also find new examples of exotic manifolds with infinite cyclic fundamental group and signature $0$:
\begin{theorem}\label{exoticS1}
 For $m\geq 13$, there exists a family of manifolds homeomorphic to $(S^1\times S^3) \# (2m) (\CP^2 \# \overline{\CP}^2 )$, but pairwise not diffeomorphic.
\end{theorem}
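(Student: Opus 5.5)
The plan is to build the family by surgery on a simply connected exotic family, and then to tell its members apart using Seiberg--Witten invariants of the universal cover, or of a suitable finite cover. For the homeomorphism type we use the classification of closed oriented $4$-manifolds with $\pi_1\cong\Z$ (Freedman, Kreck, Hambleton--Kreck--Teichner). Such a manifold is classified by its equivariant intersection form on $\pi_2$ together with the Kirby--Siebenmann invariant. When the form is extended from an integral form, we are reduced to checking rank, signature, type and $ks$. Since $(S^1\times S^3)\#(2m)(\CP^2\#\overline{\CP}^2)$ has an odd, indefinite form with signature $0$, it suffices to produce smooth manifolds $X_n$ with $\pi_1(X_n)\cong\Z$, $b_2^+=b_2^-=2m$, odd intersection form, and a $\pi_2$ that is free with form $\Z[\Z]\otimes$ (the integral form).

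For the construction, start from a simply connected minimal symplectic $4$-manifold $Y$ with $b_2^+=b_2^-$ and odd form. For instance, take a fiber sum of elliptic surfaces or a Fintushel--Stern-type manifold, chosen so that it contains a homologically essential torus $T$ of self-intersection $0$ together with an embedded nullhomologous torus $\Lambda$ in a cusp neighbourhood, or a Lagrangian torus with suitable framing. Performing Fintushel--Stern knot surgery $Y_K$ along $T$ with knots $K_n$ whose Alexander polynomials are pairwise distinct gives infinitely many simply connected manifolds that are homeomorphic to $Y$ but have distinct $SW$ invariants, $SW_{Y_{K_n}}=SW_Y\cdot\Delta_{K_n}(t^2)$. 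To introduce $\pi_1=\Z$, we take the fiber sum (or the internal sum along $T$) with $T^2\times S^2$ or $T^4$ trimmed to leave exactly one free $S^1$ factor. Alternatively, we do a $0$-framed surgery that converts a nullhomologous $S^2$ into an $S^1\times S^3$ summand-like piece. The cleanest version is to form $Z_n=Y_{K_n}\#_{T}(T^2\times\Sigma)$ appropriately, or to do a Luttinger $(1/0)$ surgery on a Lagrangian torus that kills all but one generator. We then choose parameters so that $b_2^\pm=2m$ for each $m\geq 13$. The bound $m\geq13$ arises because the smallest available building block, with $e$ and $\sigma$ matched, needs $b_2^+\geq 26$ or something comparable.

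For distinguishing the members, the manifold with $\pi_1=\Z$ has $b_1=1$, so we avoid $SW$ on it directly. Instead we pass to the $k$-fold cyclic cover $\widetilde Z_n^{(k)}$, or argue through the gluing formula for the torus sum. The invariants $SW$ of $Z_n$, computed in the chamber determined by the symplectic structure (when $b_2^+\geq 2$ the invariant is well defined despite $b_1=1$), are multiplied by $\Delta_{K_n}(t^2)$. Distinct Alexander polynomials then give non-diffeomorphic manifolds, because diffeomorphisms preserve the set of basic classes up to automorphism, and the number of basic classes differs.

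The main obstacle is the homeomorphism step: we must verify that $\pi_2(Z_n)\cong\Z[\Z]^{4m}$, that it is stably free, and that the equivariant form is extended. We would attack this by showing that the meridian loops bound vanishing-cycle disks, so that the infinite cyclic cover is the $\Z$-fold chain of copies of the simply connected pieces, glued along $T^2\times I$-type regions whose $H_2$ contribution is freely generated. After that we invoke the Hambleton--Kreck--Teichner criterion together with the odd type and $ks=0$, since the manifolds are smooth.
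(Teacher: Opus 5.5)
There is a genuine gap: you never construct the manifold that the theorem is about. For each $m\ge 13$ you need a symplectic manifold (so that $SW\neq 0$, using $b_2^+\ge 2$) with $\pi_1\cong\Z$, $\sigma=0$, $b_2^\pm=2m$ exactly, a class of odd square, and a homologically essential square-zero torus for knot surgery. None of the options you list supplies this as written:
\begin{itemize}
\item Fiber sums of elliptic surfaces have $\sigma=-8n\neq 0$.
\item If $\pi_1(Y-T)=1$, then $Y\#_T(T^2\times\Sigma_g)$ has $\pi_1\cong\pi_1(\Sigma_g)$, not $\Z$. You would have to name specific Luttinger surgeries and check that exactly $\Z$ survives, including that the meridians die.
\item A ``$1/0$ Luttinger surgery'' is, in the convention of Section~\ref{torussurgery}, the trivial regluing.
\item The sphere-surgery option, in its basic form, gives $Y\#(S^1\times S^3)$. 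Since $Y$ is simply connected and symplectic, $b_2^+(Y)$ is odd. So this manifold has the wrong Betti numbers, and its ordinary SW invariants vanish by the parity constraint $1-b_1+b_2^+\equiv 0$.
\end{itemize}
Because no manifold is pinned down, there is no Euler characteristic count, and the range $m\ge 13$ (``or something comparable'') is not derived. A route of your type might be completable. The fiber sum and Luttinger surgeries preserve $\chi$ and $\sigma$, so $\pi_1\cong\Z$ would force $b_2^\pm=b_2^\pm(Y)+1$. But every verification is missing.

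The step you call the main obstacle is actually routine. For $\pi_1\cong\Z$ and $b_2-|\sigma|\ge 6$, Hambleton--Teichner show the equivariant form is extended from $\Z$. Freedman's classification then reduces the homeomorphism question to rank, signature, parity and $ks=0$. Parity is witnessed by a surface of odd square in a simply connected piece, lifted to the universal cover. No analysis of the infinite cyclic cover is needed.

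No covers are needed for the smooth part either; the universal cover is non-compact, so that option was never available. With $b_2^+\ge 2$, the SW invariant of the $b_1=1$ manifold is already a diffeomorphism invariant. You should do the knot surgery on a torus that is essential in the \emph{final} manifold and disjoint from the gluing regions, so the Fintushel--Stern formula applies there directly. Doing it before gluing would require a gluing formula.

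The paper fills your gap by doubling. It takes the Baykur--Hamada manifold $Z_n$, with $\sigma=0$ and $b_2^\pm=2n+1$. It chooses two disjoint square-zero symplectic surfaces $S_1,S_2$ and a third torus for knot surgery, such that the complement of $S_1\cup S_2$ stays simply connected after the knot surgery (Corollary~\ref{cor2}). It then normal-sums two copies along $S_1$ and then along $S_2$.
\begin{itemize}
\item Lemma~\ref{cyclic-pi1} gives $\pi_1\cong\Z$ at once, and the double is symplectic with $\sigma=0$.
\item $\chi=2\chi(Z_n)$, or $2\chi(Z_n)+4$ when $S_1$ is the genus-$2$ section.
\item Hence $b_2^\pm=4n+4$ for $n\ge 6$, or $4n+6$ for $n\ge 5$.
\end{itemize}
Together these give exactly the even numbers $\ge 26$, which is where $m\ge 13$ in the statement comes from.
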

This will be proven in Theorem \ref{TeoexoticWm}.\\

The infinite dihedral group $D_\infty$ is the only non-trivial semidirect product between $\Z$ and $\Z_2$, with presentations 
\begin{equation*} D_\infty=\langle\sigma,\tau|\sigma^2=1,\sigma\tau\sigma=\tau^{-1}\rangle=\langle\sigma\tau,\tau|\sigma^2=1,(\sigma\tau)^2=1\rangle,\end{equation*}
and so is also equal to the free product $\Z_2 * \Z_2$. Notice that its abelianization is $\Z_2 \oplus \Z_2$. In particular, a manifold having $D_\infty$ as fundamental group has $b_1$ equal to $0$. 

This implies that among the examples of $4$-manifolds that we produce in Theorem \ref{Teo1}, some have vanishing Seiberg-Witten invariant. In fact, a manifold having non-trivial invariants has $1-b_1+b_2^+$ even, and in particular, if $b_1$ is zero, then $b_2^+$ must be odd.\\
Our examples have $b_2^+$ of both parities, which is why they are particularly interesting. We will distinguish their diffeomorphism type using Seiberg-Witten invariants of their connected double covers, see Remark \ref{doublecover}.\\


The first exotic closed $4$-manifold was discovered by Donaldson \cite{donaldson1987irrationality}. Since then, many other examples were produced, including infinitely many exotic manifolds homeomorphic to $\CP^2\#2\overline{\CP}^2$ \cite{akhmedov2010exotic}. These are the smallest simply-connected exotic manifolds known to date.\\
Regarding non-simply-connected manifolds, a vast collection was produced by Torres \cite{torres2011geography,torres2014geography}. In \cite{levine2023new}, the first example of a negative definite exotic manifold with order $2$ fundamental group was given. Later, in \cite{stszbayk}, the collection of manifolds with cyclic fundamental group was extended, including manifolds with $b_2=1$. These are the smallest examples of exotic manifolds up to date.\\

\subsection*{Idea of the proof of Theorem \ref{Teo1}} We will use simply-connected symplectic manifolds as building blocks, and glue them in such a way to obtain a symplectic manifold $W$ with infinite cyclic fundamental group, and endowed with a free involution. The quotient with respect to this involution is a manifold $Y$ with $\pi_1(Y)\cong D_\infty$. We can then create an infinite family of non-diffeomorphic manifolds by knot- or torus-surgery, and we will then verify that the equivariant intersection form is unchanged in this process.\\
However, as we will see, the classification of $4$-manifolds with $D_\infty$ fundamental group up to homeomorphism involves both the equivariant intersection form and the so-called $k$-Postnikov invariant. We can control this invariant under knot-surgery but not under torus surgery, so only in the first case we can conclude that the manifolds are homeomorphic. 

\subsection*{Organization} In Section $2$, we discuss basic operations with symplectic manifolds and we state the homeomorphism classification result that our work is based on. In Section $3$ we describe the building blocks that we will use in the construction. In Section $4$ we discuss the general case and show how to construct exotic manifolds with $D_\infty$ fundamental group starting from a given manifold satisfying some hypothesis and lastly, in Section $5$, we provide examples where the conditions are met.\\

\subsection*{Acknowledgments} I would like to thank Andras Stipsicz and Marco Marengon for their constant support through the project. I would also like to thank Mark Powell for helpful conversations and for his interest in the project. The author was supported by the grant EXCELLENCE-151337.

\section{Basics}
In this section, we recall some general invariants and constructions of $4$-manifolds. 
\subsection{Seiberg-Witten invariant} The Seiberg-Witten invariant of a $4$-manifold $M$ is a function 
$$SW: Spin^c(M)\rightarrow\Z.$$ We can also define the formal sum
$$SW_M:=\sum_{s\in Spin^c(M)} SW(s)e^s.$$
When there is no $2$-torsion in $H_2(M,\Z)$, the map $s \in Spin^c(M) \mapsto c_1(s) \in H^2(M,\Z)$ gives a bijection between $Spin^c(M)$ and the set of characteristic elements of $H^2(M,\Z)$. With a slight abuse of notation, we will identify these two sets (and the set of characteristic elements of $H_2(M,\Z)$, by Poincaré-duality).\\
We are not going to define the invariant explicitly, but we will use some important properties of it, which will allow us to differentiate smooth structures on a given manifold.\\

A closed symplectic manifold with $b_2^+>1$ has non-vanishing Seiberg-Witten invariant \cite{taubes1994seiberg}, and gluing two symplectic manifolds along symplectic submanifolds with trivial normal bundles (operation called 'normal connected sum') produces a new symplectic manifold, if the gluing map is chosen appropriately \cite[Theorem 1.3]{gompf1995new}. Some operations allow us to change the Seiberg-Witten invariant in a controlled manner.
\begin{subsection}{Torus surgery}\label{torussurgery} Consider a torus $T$ in a $4$-manifold $M$ with trivial normal bundle, and let $\lambda$ be any simple closed curve on the torus. Fix a framing on the torus, which induces an identification of the normal bundle with $T\times D^2$, and use it to define a push-off of $\lambda$ on $\partial(\nu T)$. Let $\mu$ denote the meridian of $T$.\\
Fix $p,q$ coprime. We construct the manifold $M_{T,\lambda}(p/q)$ (obtained by torus surgery along $\lambda$, with respect to the chosen framing, and with $\frac{p}{q}$ coefficients) by removing $\nu T$ and gluing back $T\times D^2$, with a diffeomorphism $\phi$ of the boundaries sending $\partial D^2$ to $p \mu + q \lambda$ (the choice of $\phi(\partial D^2)$ is enough to determine the manifold $M_{T,\lambda}(p/q)$). \\
If we fix the base point in $\partial(\nu T)$, we have
\begin{equation}\label{eq}
\pi_1(M_{T,\lambda}(p/q))=\pi_1(M-T)/< \mu^p\lambda^q=1>.
\end{equation}

We can control how the Seiberg-Witten invariant changes under torus surgery. Assume that there is no $2$-torsion in the second homology group of $M$, $M_{T,\lambda}(p/q)$, and of $M_{T,\lambda}(0)$. In \cite{morgan1997product}, it was shown that, given a characteristic class $k\in H_2(M,\Z)$ (see also \cite[Lecture 2]{fintushel2006six})
\begin{equation}\label{EqMMS}
\begin{split}
    \sum_{i}SW_{M_{T,\lambda}(p/q)}(k_{p/q}+2i[T_{p/q}])=p\sum_{i}SW_{M}(k+2i[T])+
    q\sum_{i}SW_{M_{T,\lambda}(0)}(k_{0}+2i[T_0]),
    \end{split}
\end{equation}
where $T_{p/q}$ denotes the core torus of the surgery, and $k_{p/q}$ denotes a fixed class that agrees with $k$ on the complement of the torus $T$ (and similarly for $T_0$ and $k_0$). Moreover, there are no other Spin$^c$ structures on $M_{T,\lambda}(p/q)$ other than $k_{p/q}+2i[T_{p/q}]$ whose restriction on $M_{T,\lambda}(p/q)-\nu T$ agree with $k$. \\
We used a slight abuse of notation in the above formula: the sum is meant to be on the set of Spin$^c$ structures extending $k$ on $M_{T,\lambda}(p/q)-\nu T$ (and similarly for the other terms); in particular, if $T_{p/q}$ is null-homologous, the sum consists of only one term.


This formula was used in the following:
\begin{theorem}\cite[Theorem 1]{fintushel2007reverse}\label{Teoreverse}
Let $M$ be a closed oriented smooth four-manifold, $T$ a null-homologous torus, and fix a framing of its tubular neighborhood $\nu T$. Let $\lambda\in T$ be a simple closed curve
with push-off null-homologous in $M-\nu T$. If the four-manifold $M_{T,\lambda}(0)$ has
non-trivial Seiberg-Witten invariant, in the sense that for some basic class $k_0$, $\sum_i SW_{M_{T,\lambda}(0)}(k_0+2i[T_0])\neq 0$, then the set $\{M_{T,\lambda}(1/n)\}$ contains infinitely many pairwise non-diffeomorphic four-manifolds.
\end{theorem}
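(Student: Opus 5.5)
The plan is to apply the Morgan--Mrowka--Szab\'o formula \eqref{EqMMS} with $p=1$, $q=n$, and then show that the resulting invariants cannot all be realized by finitely many diffeomorphism types.

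Since $T$ is null-homologous in $M$, the core torus $T_{1/n}$ is null-homologous in $M_{T,\lambda}(1/n)$. This uses the hypothesis that the push-off of $\lambda$ is null-homologous in $M-\nu T$. Indeed, the meridian $\mu$ of $T$ is null-homologous in the complement because $T$ bounds, and the curve $\mu+n\lambda$ bounds a disk in the new manifold. Homology therefore does not change: $H_*(M_{T,\lambda}(1/n))\cong H_*(M)$ canonically via the complement, and each sum on the left-hand side of \eqref{EqMMS} consists of a single term. On the other hand, $T_0$ may be homologically essential in $M_{T,\lambda}(0)$, which is why the hypothesis is phrased in terms of sums $\sum_i SW_{M_{T,\lambda}(0)}(k_0+2i[T_0])$. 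Putting this together, for each characteristic class $k$ of the common complement we get
\begin{equation*}
SW_{M_{T,\lambda}(1/n)}(k_{1/n}) = SW_M(k) + n\sum_i SW_{M_{T,\lambda}(0)}(k_0+2i[T_0]).
\end{equation*}

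Now take the class $k_0$ from the hypothesis and let $k$ be the class agreeing with it on the complement. The coefficient of $n$ is a nonzero integer $c$. Hence $SW_{M_{T,\lambda}(1/n)}(k_{1/n}) = SW_M(k)+nc$, and its absolute value tends to infinity as $n\to\infty$.

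To conclude, I would argue by contradiction. Suppose only finitely many diffeomorphism types occurred among the $M_{T,\lambda}(1/n)$. Each closed 4-manifold (with $b_2^+>1$, or in the appropriate chamber) has only finitely many basic classes. Therefore the set $\{|SW(s)|\}$ of values of its invariant is a finite set of integers. A diffeomorphism carries Spin$^c$ structures to Spin$^c$ structures and preserves this set of absolute values, up to the sign and orientation conventions. So finitely many diffeomorphism types would give a uniform bound on $\max_s|SW_{M_{T,\lambda}(1/n)}(s)|$. This contradicts the unboundedness of $|SW_M(k)+nc|$. Hence infinitely many of the $M_{T,\lambda}(1/n)$ are pairwise non-diffeomorphic.

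The main obstacle is justifying the single-term reduction and the correct identification of Spin$^c$ structures across the surgery. Concretely, one must check that $T_{1/n}$ is null-homologous and that no $2$-torsion appears, so that \eqref{EqMMS} applies. The case $b_2^+=1$ would also need chamber care.
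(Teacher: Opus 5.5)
Your strategy (Morgan--Mrowka--Szab\'o with $(p,q)=(1,n)$, a single term on the left, growth linear in $n$, and finiteness of basic classes to turn unbounded invariants into infinitely many diffeomorphism types) is the Fintushel--Stern argument that the paper simply cites. However, the step you single out as the crux is justified incorrectly.

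The meridian $\mu$ is \emph{not} null-homologous in $M-\nu T$; it is the other way round. If $\mu=\partial F$ with $F\subset M-\nu T$, then $F$ capped off by a normal disk would be a closed surface meeting $T$ once, contradicting $[T]=0$. In fact $[T]=0$ means the Thom class of $\nu T$ maps to $\mathrm{PD}[T]=0$ in $H^2(M)$. Exactness of $H^1(M-\nu T)\to H^2(\nu T,\partial\nu T)\cong\Z\to H^2(M)$ then gives $\phi\in H^1(M-\nu T;\Z)$, dual to a Seifert $3$-manifold of $T$, with $\phi(\mu)=1$. So $\mu$ has infinite order. Your premise would actually give the opposite conclusion. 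If $\mu$ and $\lambda$ both bounded in the complement, so would $\mu+n\lambda$, and capping with the new meridian disk would give a class dual to $T_{1/n}$, making it essential.

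The correct argument runs the same exact sequence for $M_{T,\lambda}(1/n)$, which has the same complement and new meridian $\mu+n\lambda$. The hypothesis on $\lambda$ gives $\phi(\lambda)=0$, hence $\phi(\mu+n\lambda)=1$. So the Thom class of $\nu T_{1/n}$ lies in the image of $\delta\colon H^1(M-\nu T)\to H^2(\nu T_{1/n},\partial\nu T_{1/n})$, and therefore $\mathrm{PD}[T_{1/n}]=0$. Similarly
$H_1(M_{T,\lambda}(1/n))=H_1(M-\nu T)/\langle\mu+n\lambda\rangle=H_1(M-\nu T)/\langle\mu\rangle=H_1(M)$, and Mayer--Vietoris handles the rest of the homology.

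By contrast, for $0$-surgery the new meridian is $\lambda$ itself, which bounds a surface $F$ in the complement. Then $F\cup D$ is dual to $T_0$, so $T_0$ is always essential in $M_{T,\lambda}(0)$, not just possibly.

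Two smaller points. First, to pass from $k_0$ to a class $k$ on $M$ agreeing with it on the complement, you need $k_0|_{\partial\nu T}=0$. This follows from the adjunction inequality $k_0\cdot T_0=0$ for the essential square-zero torus $T_0$ (for $b_2^+>1$). Second, as you note, finiteness of basic classes is only available as stated when $b_2^+>1$, which is the only case the paper uses.
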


\end{subsection}
\subsection{Luttinger surgery}Suppose that the $4$-manifold $M$ admits a symplectic structure, and consider a Lagrangian torus $T$ embedded in $M$. Then, $T$ has a canonical framing (known as the Lagrangian framing), so $\nu T$ is canonically identified with $T \times D^2$. If the parameter of the torus surgery is of the form $\frac{1}{q}$ for some $q$, we call the surgery a 'Luttinger surgery'. Then, the new manifold also admits a symplectic structure \cite{luttinger1995lagrangian}.
\subsection{Knot-surgery}This consists of deleting $\nu T$ and gluing back $(S^3-\nu K)\times S^1$, with $K$ a knot in $S^3$. The only restriction on the gluing diffeomorphism is to send the longitude of the knot to a meridian of $T$. 
Then, the Seiberg-Witten invariant of a given manifold after knot-surgery changes by multiplication by the Alexander polynomial of the knot $K$ \cite{fintushel1998knots}:
\begin{equation}\label{knotsurgeryformula}
    SW_{M_K}=SW_M \cdot\Delta_K(e^{2T}).
\end{equation}

\subsection{Subgroups of $D_\infty$}\label{doublecover} The following Lemma shows that there is a preferred double cover associated to a manifold with fundamental group isomorphic to $D_\infty$:
    \begin{lemma} 
    There is a unique index $2$ subgroup of $D_\infty$ isomorphic to $\Z$.
    \end{lemma}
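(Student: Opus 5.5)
We prove existence and uniqueness separately, working in the presentation $D_\infty=\langle\sigma,\tau\mid\sigma^2=1,\ \sigma\tau\sigma=\tau^{-1}\rangle$. Every element can be written uniquely as $\tau^k$ or $\sigma\tau^k$ with $k\in\Z$. Indeed, the relation $\sigma\tau\sigma^{-1}=\tau^{-1}$ shows that $\langle\tau\rangle$ is normal, and the quotient by it is generated by the image of $\sigma$, which has order $2$. Since the reflection-type elements $\sigma\tau^k$ are not powers of $\tau$, the quotient has order exactly $2$.

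\emph{Existence.} The subgroup $\langle\tau\rangle$ is then a normal subgroup of index $2$. It is infinite cyclic: in the semidirect product model $\Z\rtimes\Z_2$, the element $\tau$ corresponds to $(1,0)$, which has infinite order. So $\langle\tau\rangle\cong\Z$.

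\emph{Uniqueness.} The key step is to show that every element outside $\langle\tau\rangle$ has order $2$. We compute
\[
(\sigma\tau^k)^2=\sigma\tau^k\sigma\tau^k=\tau^{-k}\tau^k=1,
\]
using $\sigma\tau^k\sigma=\tau^{-k}$. Now let $H\le D_\infty$ be a subgroup with $H\cong\Z$. Since $\Z$ is torsion-free, $H$ contains no element of order $2$, and hence no element of the form $\sigma\tau^k$. Therefore $H\subseteq\langle\tau\rangle$.

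If moreover $[D_\infty:H]=2$, then
\[
2=[D_\infty:H]=[D_\infty:\langle\tau\rangle]\,[\langle\tau\rangle:H]=2\,[\langle\tau\rangle:H],
\]
which forces $[\langle\tau\rangle:H]=1$. Hence $H=\langle\tau\rangle$.

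The only point needing care is the normal form $\{\tau^k,\sigma\tau^k\}$ with all these elements distinct. This follows from identifying $D_\infty$ with $\Z\rtimes\Z_2$ via $\tau\mapsto(1,0)$ and $\sigma\mapsto(0,1)$; this map is an isomorphism by the standard presentation of a semidirect product. Everything else is direct computation.

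As a remark, $D_\infty$ has other index $2$ subgroups. Its abelianization is $\Z_2\oplus\Z_2$, which gives three index $2$ subgroups. The other two are isomorphic to $D_\infty$, for example the one generated by $\sigma$ and $\tau^2$. This is why the hypothesis "isomorphic to $\Z$" is needed.
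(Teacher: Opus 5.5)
Your proof is correct, but it follows a different route from the paper's.

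\textbf{The paper's route.} The paper counts. Index-$2$ subgroups of $D_\infty$ correspond to surjections $D_\infty\to\Z_2$, hence to index-$2$ subgroups of the abelianization $\Z_2\oplus\Z_2$. So there are exactly three: $\langle\tau\rangle$, $\langle\tau^2,\sigma\rangle$ and $\langle\tau^2,\sigma\tau\rangle$. The paper then asserts that only the first is infinite cyclic, the other two being copies of $D_\infty$.

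\textbf{Your route.} You use the fact that every element outside $\langle\tau\rangle$ is an involution. Hence any torsion-free subgroup lies in $\langle\tau\rangle$, and the tower law for indices finishes the argument.

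\textbf{What each buys.} Your argument needs no abelianization computation and no identification of the other two subgroups. It also proves more: every infinite cyclic subgroup of $D_\infty$ is some $\langle\tau^n\rangle$, of index $2n$. Your key observation also fills the step the paper leaves implicit: $\langle\tau^2,\sigma\rangle$ and $\langle\tau^2,\sigma\tau\rangle$ cannot be $\Z$ because they contain the involutions $\sigma$ and $\sigma\tau$. The paper's enumeration, in turn, lists all three index-$2$ subgroups, i.e.\ all connected double covers of a manifold with fundamental group $D_\infty$. That is the context in which the paper singles out the preferred cover.
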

    \begin{proof}Subgroups of index $2$ of $$D_\infty=\langle\sigma,\tau|\sigma^2=1,\sigma\tau\sigma=\tau^{-1}\rangle$$
    correspond to surjective maps $D_\infty \rightarrow \Z_2$, and so are in bijection with index $2$ subgroups of its abelianization, which is $\Z_2 \oplus \Z_2$. So, there are three subgroups of index $2$ in $D_\infty$. However, only $\langle\tau\rangle$ is isomorphic to $\Z$, while the other two are isomorphic to $D_\infty$ (these are $\langle\tau^2,\sigma\rangle$ and $\langle\tau^2,\sigma\tau\rangle$). \\
    An intuitive way to visualize it is the following: in the dihedral group $D_{2n}$ of isometries of a $2n$-gon, we find a first copy of $D_n$ generated by reflections along lines through opposite vertices, and a second copy generated by reflections across lines through mid-points of opposite edges. 
    \end{proof}
    We can thus differentiate smooth structures on a manifold with $D_\infty$ fundamental group by differentiating smooth structures on the unique double cover corresponding to $\Z \triangleleft D_\infty$.

\subsection{Homeomorphism classification} A crucial point is proving that the manifolds we will construct are homeomorphic. For manifolds with fundamental group isomorphic to $D_\infty$, the topological classification was established recently by  Hillman, Kasprowski, Powell, Ray. Before stating the result, we introduce some notation:
\begin{itemize}
    \item The equivariant intersection form of $M$ is the function $$\lambda_M:\pi_2(M)\times \pi_2(M)\rightarrow \Z[\pi_1(M)]$$ defined as $\lambda_M(\alpha,\beta)=\sum_{g\in \pi_1(M)}(\alpha\cdot g^{-1}\beta)g$. Recall the identification $\pi_2(M)\cong H_2(\Tilde{M},\Z)$, where $\Tilde{M}$ is the universal cover of $M$, and $\pi_1(M)$ acts on $\Tilde{M}$ by deck transformations. So, in the above formula, $\alpha\cdot g^{-1}\beta$ denotes the intersection of (representatives of) $\alpha$ and the translated image of $\beta$ under the action of $g^{-1}$. Also, notice that the sum is finite.
    \item The $n$-Postnikov approximation of $M$ is the complex $P_nM$, together with a map $p_n:M\rightarrow P_nM$, characterized by the property that $p_n$ induces isomorphisms $\pi_i(M) \cong\pi_i(P_nM)$ for $i\leq n$, while $\pi_i(P_nM)=0$ for $i > n$.
    \item $P_2M$ is classified by an element $k_M \in H^3(D_\infty,\pi_2(M))$, called the Postnikov $k$-invariant of $M$.
\end{itemize}

\begin{theorem}\cite[Theorem 1.10]{hillman2025homotopy},\label{ThankyouMark} 
Let $M$, $N$ be closed, oriented, smooth 4-manifolds together with isomorphisms $\pi_1(M)\cong D_\infty, $ $ \pi_1(N)\cong D_\infty$.

Suppose the equivariant intersection forms
$\lambda_M$ and $\lambda_N$
are isometric, via an isomorphism on $\pi_2$ that induces an isomorphism
$$H^3(D_\infty; \pi_2(M)) \rightarrow H^3(D_\infty; \pi_2(N))$$ sending
$k_M \mapsto k_N$.

Then M and N are homotopy equivalent, and if they are either spin or both have universal cover not spin, then M and N are homeomorphic.

\end{theorem}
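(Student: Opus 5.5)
The plan has two stages. First, upgrade the algebraic data to a homotopy equivalence $M\simeq N$ using the Postnikov $2$-type. Second, upgrade that homotopy equivalence to a homeomorphism using topological surgery, which applies because $D_\infty\cong\Z_2*\Z_2$ is virtually cyclic, hence good in Freedman's sense. Since the statement is quoted from \cite{hillman2025homotopy}, what follows is only the route I would take to reprove it.

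\textbf{Stage 1: homotopy equivalence.} The given isomorphism $\pi_2(M)\cong\pi_2(N)$ of $\Z[D_\infty]$-modules sends $k_M$ to $k_N$, so it is realised by a homotopy equivalence $P_2M\simeq P_2N$ over $BD_\infty$. Composing $p_2\colon M\to P_2M$ with this equivalence gives a $3$-connected map $f\colon M\to P_2N$. The next step is to compare $f_*[M]$ with $(p_2)_*[N]$ in $H_4(P_2N;\Z)$. For this I would use the Serre spectral sequence of $\Tilde{P}\to P_2N\to BD_\infty$, whose fibre is $K(\pi_2,2)$ with $H_4(K(\pi_2,2))=\Gamma(\pi_2)$.

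Because $D_\infty$ has virtual cohomological dimension $1$, the only sources of indeterminacy are $2$-primary torsion contributions from the two $\Z_2$ factors. The main term, in $H_0(D_\infty;\Gamma(\pi_2))$, is detected by the equivariant intersection form $\lambda$. So the hypothesis $\lambda_M\cong\lambda_N$ should force the two fundamental classes to agree, after possibly composing with a self-equivalence of $P_2N$. This is the quadratic $2$-type argument in the style of Hambleton--Kreck.

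Once the images of the fundamental classes agree, I would run obstruction theory to lift $f$ to a degree-one map $M\to N$ inducing isomorphisms on $\pi_1$ and $\pi_2$. Such a map is a homotopy equivalence by Poincaré duality with $\Z[D_\infty]$ coefficients and Whitehead's theorem on universal covers.

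\textbf{Stage 2: homeomorphism.} Given a homotopy equivalence $h\colon M\to N$, I would use the topological surgery exact sequence
\[ L_5(\Z[D_\infty])\to \mathcal{S}^{TOP}(N)\to [N,G/TOP]\to L_4(\Z[D_\infty]). \]
Here $\mathrm{Wh}(D_\infty)=0$, so $s$-cobordisms are products and simple homotopy equivalence is automatic. The $L$-groups split, by Cappell, as $L_*(\Z[\Z_2])\oplus L_*(\Z[\Z_2])$ modulo $L_*(\Z)$, plus $\mathrm{UNil}$ terms; Connolly--Davis show that the relevant $\mathrm{UNil}$ terms act trivially on structures, or can be absorbed.

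The normal invariant of $h$ lives in $H^4(N;\Z)\oplus H^2(N;\Z_2)$, together with the Kirby--Siebenmann class. The $H^4$ component vanishes because degree-one maps preserve signature. Matching the Kirby--Siebenmann invariants is where the spin hypothesis enters. If both manifolds are spin, or both have non-spin universal cover, then $\mathrm{ks}$ is a homotopy invariant in this setting: both vanish here since the manifolds are smooth. This part is routine.

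The remaining $H^2(N;\Z_2)$ component has to be killed by precomposing with self-homotopy equivalences of $N$. These can be built by pinching a $4$-ball and mapping it onto $S^2$ classes, in the manner of Cochran--Habegger and Hambleton--Kreck.

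\textbf{Main obstacle.} I expect the hardest step to be realising every element of the $H^2(N;\Z_2)$ normal invariants by self-equivalences, and checking that the $2$-torsion contributions from the two $\Z_2$ vertex groups do not obstruct this. It is exactly at this point that the hypothesis on the $w_2$-type of the universal covers is needed. The first thing I would try is to reduce to the double cover with fundamental group $\Z$ (as in Remark \ref{doublecover}), where the classification is known by Freedman--Quinn, and then argue equivariantly.
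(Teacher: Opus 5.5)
The paper gives no proof of this statement. It quotes \cite{hillman2025homotopy} and uses it as a black box, so your sketch has to stand on its own. Its architecture is the standard one: quadratic $2$-type $\Rightarrow$ image of $[M]$ in $H_4(P_2N)$ $\Rightarrow$ homotopy type, then Freedman surgery for the good group $D_\infty$. But in both stages the step carrying the content is asserted, not proved.

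\textbf{Stage 1.} $H_1$ and $H_3$ of $K(\pi_2,2)$ vanish and $H_4(D_\infty;\Z)=0$. So the Cartan--Leray spectral sequence of $P_2N\to BD_\infty$ gives an extension $0\to E^\infty_{0,4}\to H_4(P_2N)\to E^\infty_{2,2}\to 0$. Here $E^\infty_{0,4}$ is a quotient of $\Gamma(\pi_2)_{D_\infty}$, and $E^\infty_{2,2}$ is a quotient of the $2$-torsion group $H_2(D_\infty;\pi_2)\cong H_2(\langle\sigma\rangle;\pi_2)\oplus H_2(\langle\sigma\tau\rangle;\pi_2)$. So the $2$-torsion is the quotient through which $[M]$ maps, and the part related to $\lambda$ is the subgroup. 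You must show that the kernel of the passage from $H_4(P_2N)$ to the equivariant intersection form is swept out by self-equivalences of $P_2N$ preserving $\lambda$ and $k$. That kernel may contain classes detected in $E^\infty_{2,2}$. It may also contain torsion of $\Gamma(\pi_2)_{D_\infty}$, since $\pi_2$ is only stably $I\pi\oplus$free. Controlling this is precisely the content of the cited theorem, and it is where every result of this type needs input specific to the $2$-torsion in $\pi_1$. Writing ``should force'' leaves it open.

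\textbf{Stage 2.} There is one false claim and a second hole.
\begin{itemize}
\item Kirby--Siebenmann is \emph{not} a homotopy invariant when the universal covers are non-spin. For any $N$ with $\pi_1N\cong D_\infty$, $N\#\CP^2\simeq N\#{*}\CP^2$, yet their KS invariants differ. Here KS agree only because both manifolds are smooth, so the $w_2$-type hypothesis does not enter at that point.
\item No $\mathrm{UNil}$ discussion is needed. With trivial orientation character, $\mathrm{UNil}_4(\Z;\Z,\Z)=\mathrm{UNil}_5(\Z;\Z,\Z)=0$. Hence $L_5(\Z[D_\infty])=0$ and $L_4(\Z[D_\infty])$ is torsion-free.
\item That torsion-freeness is the real difficulty. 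The codimension-two Arf invariant $H_2(D_\infty;\Z_2)\to L_4(\Z[D_\infty])$ vanishes, so the surgery obstruction places no restriction on the $H^2(N;\Z_2)$-component of normal invariants of homotopy equivalences.
\item Your pinch maps, through $\alpha\circ\eta^2$ with $\alpha\in\pi_2(N)$, are supported near $2$-spheres. They only realize classes dual to the image of $\pi_2(N)\otimes\Z_2\to H_2(N;\Z_2)$, whose cokernel is $H_2(D_\infty;\Z_2)\cong\Z_2\oplus\Z_2$. You give no mechanism for these remaining classes, and that, not KS, is where the $w_2$-type hypothesis must be used.
\item The double-cover fallback does not help. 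Freedman--Quinn yields a homeomorphism of the $\pi_1=\Z$ covers, but nothing forces it to intertwine the deck involutions. Classifying those involutions is the original problem.
\end{itemize}
Your proposal locates both obstacles correctly but resolves neither, so it is a plan rather than a proof.
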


\section{Building blocks}\label{section 3}
Our main building blocks will be manifolds constructed in \cite{bayku} by Baykur and Hamada. These are the smallest known examples of exotic structures on simply-connected manifolds with signature zero. Here, we will recall the construction and some important properties.\\

\begin{theorem}\cite{bayku}\label{sign0manifolds}
    For each integer $m \geq 4$, there exists an infinite family of pairwise non-diffeomorphic 4-manifolds in the homeomorphism class of $\#_{2m+1}(\CP^2 \#\overline{\CP^2})$.
\end{theorem}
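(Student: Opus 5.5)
This result is quoted from \cite{bayku}; here is how I would reconstruct the argument, following the Baykur--Hamada strategy. The plan is to build a minimal symplectic $4$-manifold $X_m$ with $\pi_1(X_m)=1$, $\sigma(X_m)=0$ and $e(X_m)=4m+4$, i.e.\ $b_2^+=b_2^-=2m+1$. Since $b_2^+$ is odd, it will be odd, indefinite and of signature $0$. By Freedman's classification it is then homeomorphic to $\#_{2m+1}(\CP^2\#\overline{\CP}^2)$. The infinite family will then be produced from $X_m$ by a surgery that preserves the homeomorphism type but changes the Seiberg--Witten invariant.

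\textbf{Building $X_m$.} I would take a product of surfaces, or a symplectic Lefschetz fibration with small Euler characteristic and signature, whose $\pi_1$ is generated by loops on Lagrangian tori. The concrete choice is Baykur--Hamada's genus-$3$ (or small genus) Lefschetz fibrations built from explicit monodromy factorizations. For the general $m$ I would take normal connected sums (Gompf, \cite[Theorem 1.3]{gompf1995new}) with copies of $T^2\times\Sigma_g$ or $T^4$ along tori of square $0$; each such sum adds a fixed amount to $e$ while keeping $\sigma=0$, which gives all $m\ge 4$. Then I would perform Luttinger surgeries along Lagrangian tori, with $1/q$ coefficients in the Lagrangian framing. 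These keep the manifold symplectic and, by \eqref{eq}, add relations $\mu^{\pm1}\lambda^q=1$. Chosen appropriately, with the meridians $\mu$ trivial in $\pi_1$ of the complement via exterior relations, they kill every generator, so $\pi_1(X_m)=1$. Torus surgeries change neither $e$ nor $\sigma$, so the characteristic numbers are those of the unsurgered manifold.

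\textbf{The exotic family.} For Seiberg--Witten control, $X_m$ is symplectic with $b_2^+>1$, so $SW_{X_m}\neq0$ by Taubes \cite{taubes1994seiberg}. I would then choose one Luttinger surgery performed with coefficient $1/n$ rather than $1/1$, giving manifolds $X_m(n)$. Here the surgery torus $T$ is null-homologous in the $n=0$ limit complement, and the push-off of $\lambda$ is null-homologous in the complement. For $n=0$ the result $X_{m,T,\lambda}(0)$ is itself obtained by a $0$-surgery from a symplectic manifold, and it is symplectic with $b_1=1$ and nonvanishing SW; this can be arranged by building the same manifold with that torus surgery omitted. Theorem \ref{Teoreverse} then gives infinitely many pairwise non-diffeomorphic $X_m(n)$. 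These all remain simply connected, since the relation $\mu\lambda^n=1$ with $\lambda$ already killed still kills $\mu$. They keep the same $e$ and $\sigma$, hence are homeomorphic.

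\textbf{Main obstacle.} The hard step is the fundamental group computation. One must present $\pi_1$ of the complement of the Lagrangian tori explicitly and verify that, after the surgeries, all generators die for every $n$. At the same time one must keep at least one surgery whose $0$-version has nontrivial SW. Beyond that, it is delicate to realize the minimal Euler characteristic ($m=4$) with an explicit Lefschetz fibration or surface-bundle model. I would first try to use known presentations of $\pi_1(\Sigma_2\times\Sigma_2-\text{tori})$-type complements and check the relations by hand.
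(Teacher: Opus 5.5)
You follow the same overall route the paper recalls from Baykur--Hamada: a symplectic Lefschetz fibration, Luttinger surgeries to reach $\pi_1=1$, then reverse engineering. However, the homeomorphism-type step, the passage to all $m$, and the exotic family each fail as written.

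\textbf{Parity and Euler characteristic.} The inference ``$b_2^+$ is odd, hence the form is odd'' is false. $\#_{2m+1}(S^2\times S^2)$ is simply connected with $\sigma=0$ and $b_2^\pm=2m+1$, yet its form is even. Manifolds built from products of surfaces, $T^4$ and Luttinger surgeries can perfectly well be spin, and then you land in that homeomorphism class instead. Parity must be certified by an explicit class of odd square. The paper gets it from the block $K$: its reducible singular fiber has components that are closed surfaces of odd self-intersection surviving in $Z_m$ (Section \ref{subsectTheFibrZm}(\ref{oddity})).

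Your mechanism for reaching every $m$ also changes nothing. A normal sum along a square-zero torus changes $e$ by $e(Y)-2e(T^2)=e(Y)$, which is $0$ for $Y=T^2\times\Sigma_g$ or $T^4$. In the paper the fibration has fiber genus $m+1$ over $\Sigma_2$ with four singular fibers, so $e=4m+4$. Raising the fiber genus by one adds $4$ to $e$ (e.g.\ a normal sum with $\Sigma_2\times T^2$ along the genus-$2$ section) and supplies the extra Lagrangian tori.

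\textbf{The exotic family.} Let $X'$ be the symplectic manifold with the last Luttinger surgery, on $T'$ along $\lambda'$, omitted. Then $b_1(X')=1$ and $\lambda'$ generates $H_1(X')$. It is the meridian $\mu'$, not $\lambda'$, that is already trivial: it is null-homologous in $X'-\nu T'$ because $T'$ has a dual torus, and null-homotopic by the other relations.

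Changing the coefficient from $1/1$ to $1/n$ adds the relation $\mu'(\lambda')^n=1$. This gives $(\lambda')^n=1$ and $H_1\cong\Z/n$, so your $X_m(n)$ are not simply connected. Theorem \ref{Teoreverse} does not apply to them anyway: $T'$ is not null-homologous in $X'$, and $\lambda'$ is not null-homologous in its complement.

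The paper (Definition \ref{def1}, Corollary \ref{cor2}) instead does $1/n$ surgery on the \emph{core} torus $T$ of that last surgery. This $T$ is null-homologous in the simply connected manifold, and the surgery is taken along $\lambda_T=\pm\mu'$, which is null-homologous in the complement. Then $X_{T,\lambda_T}(0)=X'$ is symplectic. In terms of $T'$ these are integral surgeries adding $(\mu')^k\lambda'=1$, which still kill $\lambda'$ because $\mu'$ is trivial. Your sentence ``$\lambda$ already killed still kills $\mu$'' is correct only in these core-torus coordinates, not for the family you actually defined.
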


Baykur and Hamada constructed the family in two ways. We will denote these families $\{Z_{m,k}\}$ and $\{Z_{m,J}\}$, where $J$ refers to a knot in $S^3$. We will also denote the manifold $Z_{m,1}$ and $Z_{m,U}$ by $Z_m$.
We will now briefly recall the construction of these manifolds and state some important properties that we are going to use later on.

\begin{figure}[h]
    \def\svgwidth{\columnwidth}
    \centering
    
    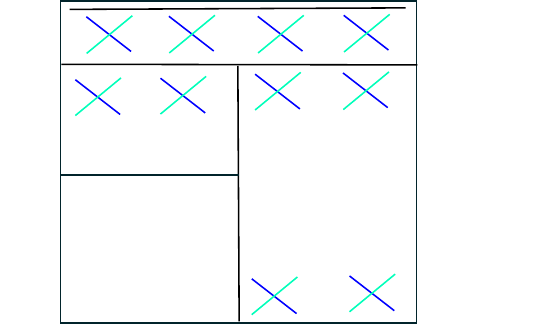

    \caption{The fibration $\textgoth{Z}_m$ and its decomposition into blocks $A$, $B_0$, $C_0$. The red asterisk indicates the chosen base-point (the intersection between the three blocks is diffeomorphic to $S^1$, and we choose the base-point there).\\
    Surgeries on the link of Lagrangian tori displayed in $C_0 \cup B_0$ in dark blue yield the simply connected manifold $Z_m$, with a similar decomposition $Z_m=A\cup B \cup C$.\\
    We also define $\overline{Z}_m$ to be $K\cup B \cup (C - \Sigma)$.}
    \label{fibration}
\end{figure}

\subsection{The fibration $\textgoth{Z}_m$}\label{subsectTheFibrZm}
   Baykur and Hamada constructed a closed Lefschetz fibration $\textgoth{Z}_m$ over the genus $2$ surface, with generic fiber of genus $m+1$.\\
    $\textgoth{Z}_m$ consists of three pieces $A$, $B_0$, $C_0$, as in Figure \ref{fibration}.
    In each of these blocks, the fiber has one boundary component. $A$ is obtained from a non-trivial fibration $K$ over the torus by enlarging the genus of the fiber, while $B_0$ is a trivial fibration (also over the torus), glued to $A$ by a fiber-sum operation. The genus of the fiber of both $A$ and $B_0$ is $m$.\\
    $C_0$ is also trivial, isomorphic to $\Sigma_2\times \mathring{T}_2$. We will use the following:
    \begin{enumerate}
    \item\label{oddity} $K$ contains $4$ nodal fibers, one of which is reducible. The components of the reducible fiber give rise to closed surfaces in $\textgoth{Z}_m$ and $Z_m$ with odd self-intersection.
    \item\label{generatorsofZm} Since $\textgoth{Z}_m$ is a Lefschetz fibration (admitting a section), its fundamental group is generated by loops forming a basis of the section and loops forming a basis of the fiber. Specifically, we can choose a symplectic basis ${x_1, y_1, x_2, y_2}$ of the first homology group of the section, and a symplectic basis ${a_1,b_1, ..., a_{m+1},b_{m+1}}$ of the fiber, and connect everything to a chosen base-point to obtain a set of generators for $\pi_1(\textgoth{Z}_m)$. Here $a_i,b_i$ are dual to each other, and similarly $x_j,y_j$.
     \item\label{vancycles} The curves $a_2$ and $a_1a_2b_2^{-1}a_3[b_3,a_3]b_2$ are vanishing cycles when considered in $K$.
    \end{enumerate}

    \subsection{From $\textgoth{Z}_m$ to $Z_m$ by surgery}\label{relationsinZm} There exists a link of tori in $B_0 \cup C_0$, displayed in Figure \ref{fibration}, such that Luttinger surgeries on this link produce the simply-connected manifold $Z_m$, with a decomposition $Z_m=A\cup B\cup C$. \\
    The simply-connectedness of $Z_m$ is a consequence of the relations introduced in the fundamental group by the surgeries and the presence of the vanishing cycles; more details will be given in Lemma \ref{Pi1}. A key ingredient will be the following (here $\Sigma_g^k$ denotes the genus $g$ surface with $k$ boundary components):
    \begin{proposition}[\cite{bayku}, Proposition 2]\label{BHprop2}
         Consider $X_0=\Sigma^1_g\times \Sigma_1^1$, for $g\geq2$, with the product symplectic structure $\omega_0$. There
are links of embedded Lagrangian tori $L_i$ in $(X_0, \omega_0)$ and Luttinger surgeries along them, which result in symplectic 4-manifolds $(X_i, \omega_i)$, for $i=1,2$, respectively, with the following property:\\
There is a pair of curves $\{a,b\}$ on $\Sigma=\Sigma^1_g\times {pt}$ represented by some $\{\alpha,\beta\}$ in $\pi_1(X_i)$, such
that the quotient of $\pi_1(X_i)$ by the normal closure of $\beta$ is a cyclic group generated by the image of $\alpha$ under the quotient map, where\\
(a) for $i=1$, $a$ and $b$ intersect transversally once in $\Sigma$,\\
(b) for $i=2$, $a$ and $b$ are disjoint and together do not separate $\Sigma$.\\
In particular, $\pi_1(X_i)$ is the normal closure of $\{\alpha, \beta\}$.
    \end{proposition}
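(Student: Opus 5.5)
This is Baykur--Hamada's Proposition 2, cited from \cite{bayku}. To prove it, I would build the Lagrangian links by hand inside the product $X_0=\Sigma^1_g\times\Sigma^1_1$ and read off $\pi_1$ of the surgered manifolds from equation \eqref{eq}.

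\textbf{Setup.} Let $a_1,b_1,\dots,a_g,b_g$ be standard generators of the punctured surface $\Sigma^1_g$, and $x,y$ those of $\Sigma^1_1$. Then $\pi_1(X_0)=F(a_i,b_i)\times F(x,y)$ is a product of free groups. Lagrangian tori are products $\gamma\times\delta$ of a curve $\gamma$ in $\Sigma^1_g$ and a curve $\delta$ in $\Sigma^1_1$. We choose parallel copies pushed to different levels so that the tori in a link are pairwise disjoint.

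\textbf{Choosing the surgeries.} For $i=1$ we take tori such as $a_j'\times x'$ and $b_j'\times y'$, following the Fintushel--Park--Stern / Baldridge--Kirk pattern. The $\pm1$ Luttinger surgeries then impose relations of the form
\[ a_j=[b_j^{-1},y^{-1}],\qquad b_j=[x^{-1},a_j^{-1}] \]
for $j\ge 2$, together with relations $x=[\cdot,\cdot]$ and $y=[\cdot,\cdot]$ involving $a_1,b_1$. For $i=2$ we use an analogous link adapted to a disjoint nonseparating pair, namely $a=a_1$ and $b=a_2$. In each case the meridians of the tori, and the commutators coming from the product structure, are expressed via the standard presentation worked out by Baldridge--Kirk for such surgered products.

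\textbf{Key step: the quotient by $\beta$.} Take $\beta=b$. Setting $\beta=1$ kills the commutators in which $\beta$ appears. That makes $x$ (or $y$) trivial. Feeding this back through the surgery relations kills each $a_j,b_j$ with $j\ge2$, then the remaining $\Sigma^1_1$ generator, and finally leaves only $\alpha$. Every surviving relation is a commutator involving $\alpha$ and killed elements, so the quotient is cyclic, generated by the image of $\alpha$.

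The statement that $\pi_1(X_i)$ is the normal closure of $\{\alpha,\beta\}$ then follows. Killing $\alpha$ as well gives the trivial group, so $\pi_1(X_i)$ is normally generated by $\alpha$ and $\beta$.

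\textbf{Main obstacle.} The hard part is bookkeeping the basepoint paths and the conjugations of the meridians for a link, rather than a single torus. One has to check that the stated relations hold exactly, not merely up to conjugacy, so that the cascade actually closes. I would rely on the Baldridge--Kirk computations for $T^2\times\Sigma$ to handle this.
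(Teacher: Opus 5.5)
The paper does not reprove this proposition. It imports it from Baykur--Hamada and only remarks that the proof analyzes the Luttinger relations $\mu\lambda=1$. Your plan is of that kind, but as written it has a genuine gap: the link you describe does not exist.

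For product tori, $(\gamma\times\delta)\cap(\gamma'\times\delta')=(\gamma\cap\gamma')\times(\delta\cap\delta')$. Homologically, $[a_j\times x]\cdot[b_j\times y]=\pm(a_j\cdot b_j)(x\cdot y)=\pm1$. So $a_j'\times x'$ and $b_j'\times y'$ intersect for every choice of parallel copies, and no pushing "to different levels" separates them. Hence the relations $a_j=[b_j^{-1},y^{-1}]$ and $b_j=[x^{-1},a_j^{-1}]$, which come precisely from these two dual tori, cannot be imposed together. Your cascade ($x=1$ kills $b_j$, which kills $a_j$) therefore has nothing to run on.

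The general constraint is that a torus $\gamma\times x$ and a torus $\gamma'\times y$ in the same link must have disjoint surface factors. This is exactly what governs the surgeries that produce $x$ and $y$ from $\beta$, which you leave unspecified as $x=[\cdot,\cdot]$, $y=[\cdot,\cdot]$. Case (b) is not constructed at all.

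The obstacle you single out is also misplaced. In a normal-closure argument, conjugation is harmless. What must be checked is:
\begin{itemize}
\item each meridian bounds a punctured dual torus that misses the other link components (otherwise it picks up conjugates of other meridians, i.e.\ of other slopes);
\item the copies of a generator appearing in different relations are conjugate in $X_0\setminus L$.
\end{itemize}
Citing Baldridge--Kirk does not supply this, since their computation concerns a different link in a closed product.

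Both cases can be repaired within your framework.

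\textbf{Case (a)}, with $\alpha=a_1$ and $\beta=b_1$. Take the tori
\begin{itemize}
\item $a_j'\times x'$ and $b_j'\times x''$ for $2\le j\le g$, with slopes $a_j$ and $b_j$;
\item $a_1'\times x'''$ with slope $x$;
\item $a_1''\times y'$ with slope $y$.
\end{itemize}
These are pairwise disjoint, since the only torus with a $y$-factor has surface factor $a_1''$. Each dual torus ($b_j\times y$, $a_j\times y$, $b_1\times y$, $b_1\times x$) misses the rest of the link. Up to signs and conjugation the relations are $x=[b_1,y]$, $y=[b_1,x]$, $a_j=[b_j,y]$ and $b_j=[a_j,y]$. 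Modulo $\langle\langle\beta\rangle\rangle$ this gives $x=y=1$, then $a_j=b_j=1$, leaving only $\alpha$.

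\textbf{Case (b)}, with $\alpha=a_1$ and $\beta=a_2$. Use
\begin{itemize}
\item $b_2'\times y'$ with slope $y$;
\item $a_1'\times x'$ with slope $x$;
\item $b_2''\times x''$ with slope $b_2$;
\item $b_1'\times x'''$ with slope $b_1$;
\item $a_j'\times x^{(4)}$ and $b_j'\times x^{(5)}$ for $j\ge 3$, with slopes $a_j$ and $b_j$.
\end{itemize}
The relations become $y=[a_2,x]$, $x=[b_1,y]$, $b_2=[a_2,y]$, $b_1=[a_1,y]$, together with the $a_j,b_j$ relations as in (a). The same cascade leaves only $\alpha$.

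One caution: do not put two surgeries on parallel copies of the same class $\gamma\times x$. The dual torus of each would then meet the other, spoiling the commutator form of the meridians.
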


     The proof of this proposition is based on a careful analysis of the relations coming from Luttinger surgery. These are of the form 
     \begin{equation}\label{eq1}
         \mu \lambda =1,
     \end{equation}
     where $\mu$ is the meridian of the torus and $\lambda$ the surgery slope.

    \subsection{Properties of the manifold $Z_m$}\label{subsectManifldZm} The manifold $Z_m$ is homeomorphic to $\#_{2m+1}(\CP^2 \#\overline{\CP}^2)$: in fact, it has signature $0$ and odd intersection form by Subsection \ref{subsectTheFibrZm} (\ref{oddity}), and being simply connected, this is enough to pin down its homeomorphism type. However, since $Z_m$ is obtained by Luttinger surgeries from a symplectic manifold, it admits a symplectic structure, and, in particular, its Seiberg-Witten invariant is non-vanishing. We deduce that $Z_m$ is homeomorphic but not diffeomorphic to $\#_{2m+1}(\CP^2 \#\overline{\CP}^2)$.\\
     We are particularly interested in some special submanifolds of $Z_m$:
    \begin{enumerate}
        \item\label{Sigma in Zm} There exists an embedded genus $2$ surface $\Sigma$ in $Z_m$ (descending from a section of $\textgoth{Z}_m$) of $0$ self-intersection and simply connected complement;
        \item\label{newtori}  Each time we enlarge the genus of the fiber of $\textgoth{Z}_m$ by one, we produce two new pairs of Lagrangian dual tori in the block $A$ (given by $x_1 \times a_l$, $x_1 \times b_l$, $y_1 \times a_l$, $y_1 \times b_l$, for $5\leq l \leq m$; so for example $x_1 \times a_l$ is dual to  $y_1 \times b_l$), and so they descend to Lagrangian tori in $Z_m$. As explained in (\cite{bayku}, Addendum 11), each one of these new tori has simply connected complement in $Z_m$ and is available for surgery procedures.
    \end{enumerate}
    \subsection{Exotica}\label{subhowgetexotica} 
    There are two ways to produce exotic manifolds associated to $Z_m$:
\begin{definition}\label{def1}
    Let $Z_m$ be the manifold described in Figure
    \ref{fibration}, and let $T$ be the core torus of the surgery on $ x_2\times a_{m+1}$, with the notation of Section \ref{subsectTheFibrZm} (\ref{generatorsofZm}). We define the family $\{Z_{m,k}\}$ as the family of manifolds obtained by torus surgery with parameter $\frac{1}{k}$ on $T$.
\end{definition}

In the above definition, $T$ is null-homologous because surgery on $x_2\times a_{m+1}$ kills a loop in the fundamental group, so it decreases $b_1$ by one and $b_2$ by two. 
\begin{definition}\label{def2}
    Let $Z_m$ be the manifold described in Figure
    \ref{fibration}. Let $T\subset A-K$ be a Lagrangian torus obtained by enlarging the genus of the fiber, as described in Section \ref{subsectManifldZm} (\ref{newtori}). We define the family $\{Z_{m,J_k}\}$ as the manifolds obtained by knot surgery on $T$ using a family of knots $\{J_k\}$ with distinct Alexander polynomials. 
\end{definition}

In each of these families there is an infinite subfamily of exotic copies of $\#_{2m+1}(\CP^2 \#\overline{\CP}^2)$. In the first case, we distinguish the diffeomorphism type by Theorem \ref{Teoreverse}, while in the second, we use the knot-surgery formula \eqref{knotsurgeryformula}. For notational purposes, we will denote the second family simply by $\{Z_{m,J}\}$.

\subsection{Other Lemmas} Fundamental group computations require extra care. We will need the following lemmas. Firstly, we will strengthen Proposition \ref{BHprop2} (recall that, when talking about normal closures of loops in the fundamental group, we can ignore the choice of the base-point):
\begin{corollary}\label{corBH}
     Recall that $\Sigma_g^n$ denotes the n-punctured genus-g surface. Consider $X_0=\Sigma^1_g\times \Sigma_1^2$, for $g\geq2$, with the product symplectic structure $\omega_0$. Let $\delta$ be one of the two boundary curves of $\Sigma_1^2$. For any pair of curves $\{a,b\}$ on $\Sigma=\Sigma^1_g\times {pt}$, that are either dual or disjoint and linearly independent, there are links of embedded Lagrangian tori $L_i$ in $(X_0, \omega_0)$ and Luttinger surgeries along them, which result in symplectic 4-manifolds $(X, \omega)$, with the following property:\\
The quotient of $\pi_1(X)$ by the normal closure of $\{b, \delta\}$ is a cyclic group generated by the image of $a$ under the quotient map.\\
In particular, $\pi_1(X)$ is the normal closure of $\{a, b, \delta\}$.
\end{corollary}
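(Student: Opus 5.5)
Two things change from Proposition \ref{BHprop2} to Corollary \ref{corBH}: the base is now the twice-punctured torus $\Sigma_1^2$, and the pair $\{a,b\}$ is arbitrary within its type. I will deal with each in turn and then reduce to Proposition \ref{BHprop2}.

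\emph{Arbitrary curves.} Let $\{a,b\}$ be a given pair on $\Sigma^1_g$ that is either dual or disjoint and linearly independent. By the classification of surfaces, there is a diffeomorphism $f$ of $\Sigma^1_g$, fixing the boundary, that sends the model pair $\{a_0,b_0\}$ of Proposition \ref{BHprop2} of the same type to $\{a,b\}$. For disjoint non-separating pairs, $f$ exists because the complement of the pair is connected of a fixed topological type in both cases. The map $f\times \mathrm{id}$ is a symplectomorphism of $\Sigma^1_g\times\Sigma^1_1$ when we use the product form with an area-preserving $f$; by Moser's argument we may isotope $f$ to be area preserving. It therefore carries the Lagrangian link $L_i$ and its Lagrangian framings to a new Lagrangian link with the same framings. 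Performing the same Luttinger surgeries on the image gives a manifold $X'$ symplectomorphic to $X_i$, in which the roles of $\alpha,\beta$ are played by curves representing $a,b$. So Proposition \ref{BHprop2} holds verbatim for the given pair.

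\emph{Second puncture.} Write $\Sigma^2_1=\Sigma^1_1\setminus D$, where $D$ is a small disc in the interior, disjoint from the projection of the tori, and let $\delta=\partial D$. Set $X_0=(\Sigma^1_g\times\Sigma^1_1)\setminus(\Sigma^1_g\times D)$. Since all tori of $L_i$ lie away from $\Sigma^1_g\times D$, we can do the surgeries inside $X_0$, obtaining $X$ with
\[X\cup(\Sigma^1_g\times D)=X',\]
where $X'$ is the manifold of the previous step. By Seifert--van Kampen, the piece $\Sigma^1_g\times D$ has fundamental group $\pi_1(\Sigma^1_g)$, and it is glued along $\Sigma^1_g\times\delta$, whose fundamental group is $\pi_1(\Sigma^1_g)\times\langle\delta\rangle$. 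Hence
\[\pi_1(X')\cong\pi_1(X)/\langle\langle\delta\rangle\rangle .\]
Consequently $\pi_1(X)/\langle\langle b,\delta\rangle\rangle\cong\pi_1(X')/\langle\langle b\rangle\rangle$, which is cyclic generated by the image of $a$ by the previous step.

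\emph{Normal generation.} To see that $\pi_1(X)$ is the normal closure of $\{a,b,\delta\}$, note that the quotient by $\langle\langle a,b,\delta\rangle\rangle$ equals the quotient of a cyclic group generated by $a$ by $a$ itself, which is trivial.

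The main obstacle I expect is justifying the Seifert--van Kampen step carefully. The boundary component $\Sigma^1_g\times\delta$ has fundamental group $\pi_1(\Sigma^1_g)\times\mathbb Z$, and one must check that gluing in $\Sigma^1_g\times D$ kills only $\delta$: the $\pi_1(\Sigma^1_g)$ factor maps isomorphically onto the fundamental group of the glued piece, so no extra relations are introduced. Basepoint choices for $\delta$ do not matter, since only normal closures appear in the statement.
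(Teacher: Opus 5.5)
Your proposal is correct and follows essentially the same route as the paper. Both arguments reduce to Proposition~\ref{BHprop2} by observing that killing $\delta$ returns you to $\Sigma^1_g\times\Sigma^1_1$: the paper does this via $\Sigma^2_1=\Sigma^1_1\cup P$ and $\pi_1(\Sigma^2_1)=\pi_1(\Sigma^1_1)*\langle\delta\rangle$, and you do it by capping off with $\Sigma^1_g\times D$. Your version is slightly more careful than the paper's in two places. You run the Seifert--van Kampen step after the surgeries rather than before. You also justify the ``for any pair $\{a,b\}$'' clause with an explicit change-of-coordinates symplectomorphism $f\times\mathrm{id}$, which the paper leaves implicit.
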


\begin{proof}
    Notice that we can obtain $\Sigma_1^2$ from $\Sigma_1^1$ by attaching a pair of pants $P$. Let us fix a base point on the boundary component $\delta_0$ of $P$ that we glue to $\Sigma_1^1$,
    so that $\pi_1(P)$ is freely generated by $\delta_0$ and $\delta$ (here we have to connect $\delta$ to the base-point).
    So, by Van-Kampen Theorem, $$\pi_1(\Sigma_1^2)=\pi_1(\Sigma_1^1)*\langle\delta\rangle.$$
    We then see that $\pi_1(\Sigma^1_g\times \Sigma_1^2)=\pi_1(\Sigma^1_g) \times (\pi_1(\Sigma_1^1)*\langle\delta\rangle)$. This means that if we quotient this group by the normal closure of $\delta$, we obtain $\pi_1(\Sigma^1_g\times \Sigma_1^1)$.\\
   We now apply Proposition \ref{BHprop2}: if we perform all the prescribed surgeries on the same link of tori in $\Sigma^1_g\times \Sigma_1^2\subset \Sigma^1_g\times \Sigma_1^1$, we see that the fundamental group is normally generated by the curves $a, b,\delta$. Moreover, if we quotient by the normal closure of $\delta$ and $b$, we obtain a cyclic group generated by $a$.
\end{proof}

We can now prove the following:
\begin{lemma}\label{Pi1}
    Recall that $Z_m$ can be expressed as $A\cup B\cup C$ as in Figure \ref{fibration}, that $K\subset A$, and that $\Sigma \subset C$ descends from a section (for their precise definitions, see Section \ref{relationsinZm} and \ref{subsectManifldZm} (\ref{Sigma in Zm})). Then, $\pi_1(K\cup B \cup (C-\Sigma))=1$.
\end{lemma}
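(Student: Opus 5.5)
We prove the lemma with Van Kampen's theorem applied to the decomposition $\overline{Z}_m=K\cup B\cup (C-\Sigma)$. The strategy is to show that the fundamental group is normally generated by fiber curves, and then to kill those curves one at a time: some via the Luttinger relations of Corollary \ref{corBH}, and the rest via the vanishing cycles of $K$ (Subsection \ref{subsectTheFibrZm} (\ref{vancycles})). We take the base-point on the common circle of the three blocks, as in Figure \ref{fibration}, and use the generators of Subsection \ref{subsectTheFibrZm} (\ref{generatorsofZm}). The section loops $x_1,y_1$ live over the base of $A$ (hence of $K$) and $x_2,y_2$ over the base of $C$, while the fiber loops $a_i,b_i$ are shared by all the blocks.

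\textbf{Step 1: the block $C-\Sigma$.} Here $C$ is obtained from $C_0\cong \Sigma_2\times\mathring{T}_2$ by Luttinger surgeries. Removing the section $\Sigma$ punctures each fiber $\mathring{T}_2$ once more, so $C-\Sigma$ is obtained from $\Sigma_2^1\times\Sigma_1^2$ by Luttinger surgeries, with $\delta$ the meridian of $\Sigma$. The roles of factors here (which one plays $\Sigma_g^1$) must be matched to Figure \ref{fibration}. We then apply Corollary \ref{corBH} to the surgeries in $C$, and likewise Proposition \ref{BHprop2} to those in $B$, with the pair $\{a,b\}$ chosen as in Baykur--Hamada. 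This shows that $\pi_1$ of each block is normally generated by two curves $\{a,b\}$ (together with $\delta$ for $C-\Sigma$). Moreover, after killing $b$ (and $\delta$), the group becomes cyclic, generated by $a$.

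\textbf{Step 2: the meridian $\delta$ and the section loops.} The meridian $\delta$ of $\Sigma$ bounds a disk in a fiber, punctured by the section. In the glued manifold the fiber extends across $K\cup B$, so $\delta$ is identified with a product of commutators of fiber curves; in fact, on the genus $m+1$ fiber minus a point, $\delta$ is $\prod[a_i,b_i]$. Hence it suffices to show that all the $a_i,b_i$ die. Concretely, the loops $x_j,y_j$ are killed, or expressed through fiber loops, by the surgery relations $\mu\lambda=1$ in \eqref{eq1}, exactly as in Baykur--Hamada's proof that $Z_m$ is simply connected. Since none of these surgery tori meet $\Sigma$, those relations survive in the complement.

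\textbf{Step 3: killing the fiber curves.} In $K$ the curves $a_2$ and $a_1a_2b_2^{-1}a_3[b_3,a_3]b_2$ are vanishing cycles, so they are trivial in $\pi_1(K)$. This gives $a_2=1$, and then $a_1=b_2^{-1}a_3^{-1}[b_3,a_3]^{-1}b_2$ up to conjugation. We then feed these into the cyclic quotients from Steps 1 and 2. Killing the curve $b$ of one block (using a vanishing cycle or a relation from the other block) makes that block's group cyclic, generated by $a$, and then $a$ is killed by the other vanishing cycle. Iterating the argument across the two blocks $B$ and $C-\Sigma$, every generator dies.

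\textbf{Main obstacle.} The hard step is this bookkeeping: checking that the specific curves $\{a,b\}$ used in $B$ and $C$ match the vanishing cycles $a_2$ and $a_1a_2b_2^{-1}a_3[b_3,a_3]b_2$ in such a way that the chain of quotients actually closes up. We must also check that removing $\Sigma$ costs exactly the extra generator $\delta$ and nothing else. For the latter we would first invoke Corollary \ref{corBH}, which was designed for exactly this purpose. Then we would follow Baykur--Hamada's original simple-connectivity argument for $Z_m$ verbatim, noting at each step that the only extra relation used there, namely the triviality of $\delta$, already holds because $\delta$ is a product of commutators of fiber curves that are already dead.
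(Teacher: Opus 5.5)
There is a genuine gap. The step you set aside as ``bookkeeping'' is the whole content of the lemma, and as you set it up it does not close.

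The trouble starts with your choice of $\delta$ as the meridian of $\Sigma$. With that choice, $\delta=\prod_{i=1}^{m+1}[a_i,b_i]$ involves $a_{m+1},b_{m+1}$, which live only in the fiber piece of $C-\Sigma$. But your only control over $\pi_1(C-\Sigma)$ is Corollary~\ref{corBH}, which kills that group only after $\delta$ is already dead. So ``$\delta$ dies because all fiber curves are already dead'' is circular. Appealing to Baykur--Hamada's argument for $Z_m$ does not help, because there the meridian of $\Sigma$ bounds a normal disk for free. Concretely, once $x_2,y_2$ and $\prod_{i\le m}[a_i,b_i]$ are dead, your version only tells you that the image of $\pi_1(C-\Sigma)$ is normally generated by $[a_{m+1},b_{m+1}]$. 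That makes it perfect, not trivial.

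The paper avoids this by taking $\delta$ to be the \emph{other} boundary circle of $\Sigma_1^2$, the one along which the fiber piece of $C$ is glued to the fiber of $B$; Corollary~\ref{corBH} allows either choice. The argument then runs as follows.
\begin{enumerate}
\item By Corollary~\ref{corBH}, $\pi_1(C-\Sigma)$ is normally generated by $x_2,y_2,\delta$, and all three also lie in $B$.
\item Proposition~\ref{BHprop2}, applied to $B$ with the disjoint pair $a_2$, $a_1a_3$, says $\pi_1(B)$ is normally generated by $a_2,a_1a_3$. It also says $\pi_1(B)$ becomes cyclic on $a_1a_3$ once $a_2$ is killed.
\item After Van Kampen, the whole group is therefore normally generated by $a_2,a_1a_3$.
\item The vanishing cycle $a_2$ in $K$ makes the group cyclic, hence abelian.
\item In an abelian group the second vanishing cycle $a_1a_2b_2^{-1}a_3[b_3,a_3]b_2$ reads $a_1a_3=1$, which kills the generator.
\end{enumerate}

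Your Step~3 misses this abelianization step. Solving for $a_1$ as a conjugate expression leads nowhere by itself. Without naming the pairs $\{x_2,y_2\}$ and $\{a_2,a_1a_3\}$, and without noting that $x_2,y_2,\delta$ already lie in $B$, ``iterating across the two blocks'' is not yet an argument.
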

\begin{proof}
    Notice that $C_0-\Sigma=\Sigma^1_2\times \Sigma_1^2$. We will follow the arguments used in \cite{bayku}, but with Corollary \ref{corBH} instead of Proposition \ref{BHprop2}. Choose $\delta$ as the boundary curve on $\Sigma_1^2$ that is glued to $\Sigma^1_g$.\\
    Then, as done in \cite{bayku}, we perform Luttinger surgeries (with coefficient equal to $1$) on a link of tori in $C_0$ and $B_0$, and call $C$ and $B$ the results after the surgeries. By Corollary \ref{corBH}, we can arrange that $\pi_1(C-\Sigma)$ is normally generated by $x_2, y_2$, and $\delta$. By Proposition \ref{BHprop2}, we can arrange that $\pi_1(B)$ is normally generated by $a_2$ and $a_1a_3$, and that the quotient of $\pi_1(B)$ by the normal closure of $a_2$ is free, generated by $a_1a_3$.
    We can compute the fundamental group of $\overline{Z}_m:=K\cup B\cup (C-\Sigma)$ by Van Kampen Theorem (twice): when we glue $B$ and $C$, the curves $x_2,y_2,$ and $\delta$ are included in the normal closure of $\{a_2,a_1a_3\}$. Moreover, as seen in Section \ref{subsectTheFibrZm}(\ref{vancycles}), $a_2$ is a vanishing cycle, so, when we include this relation in $K$, we obtain that the fundamental group of $\overline{Z}_m$ is cyclic (hence abelian) generated by $a_1a_3$. However, the abelianization of the relation introduced by the second vanishing cycle is exactly $a_1a_3=1$, and so we deduce that $\overline{Z}_m$ is simply connected.
\end{proof}

\begin{corollary}\label{cor1}
    Assume the notation of Lemma \ref{Pi1}, and let $a_i,b_i,x_j,y_j$ be as in Section \ref{subsectTheFibrZm} (\ref{generatorsofZm}). Let $T_1=x_1\times a_5$, $T_2=y_1\times a_5'$, with $a_5'$ a disjoint translated copy of $a_5$, and $T_3=x_1\times a_6$ be Lagrangian tori in $A-K \subset Z_m$. Then,
    $\pi_1(Z_{m}-(\Sigma \cup T_1 \cup T_2))=1$ for $m\geq5$, while
    $\pi_1(Z_{m}-(T_1 \cup T_2\cup T_3))=1$ for $m \geq 6$.
\end{corollary}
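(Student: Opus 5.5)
The plan is to reduce everything to Lemma \ref{Pi1}, which says that $\overline{Z}_m = K \cup B \cup (C-\Sigma)$ is simply connected. The tori $T_1, T_2, T_3$ all lie in $A - K$. By construction, $A - K$ is obtained from $K$ by enlarging the genus of the fiber, so $A-K$ is (up to a collar) a product of a punctured torus with a surface spanned by the added handles $a_l, b_l$, $5 \le l \le m$. Removing the tori leaves a space whose fundamental group we can control explicitly. Write
\[
Z_m-(\Sigma\cup T_1\cup T_2)=\overline{Z}_m \cup_{\partial} \bigl((A-K)-(T_1\cup T_2)\bigr).
\]
We apply Van Kampen along the common boundary. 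This boundary is a circle bundle / product piece containing the base loops $x_1, y_1$ and the fiber curves.

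\textbf{Step 1 (meridians).} First identify the meridians of $T_1, T_2$. The torus $T_1 = x_1\times a_5$ has a dual Lagrangian torus $y_1 \times b_5$ (cf. Section \ref{subsectManifldZm} (\ref{newtori})). The meridian $\mu_1$ of $T_1$ is then a commutator of the dual loops, $[y_1, b_5]$. Similarly, $\mu_2 = [x_1, b_5']$ for $T_2 = y_1\times a_5'$, with $b_5'$ a translate of $b_5$ dual to $a_5'$. So $\pi_1\bigl((A-K)-(T_1\cup T_2)\bigr)$ is generated by $x_1, y_1$ and the handle curves $a_l, b_l$ for $l \ge 5$, together with the boundary loops. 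Every new generator is either a loop already living in $\overline{Z}_m$ or a commutator of such loops.

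\textbf{Step 2 (all generators die).} Show that each generator is trivial after gluing. The loops $x_1, y_1$ and the fiber curves $a_l, b_l$ can be isotoped into $\overline{Z}_m$, away from the tori, for instance by pushing $a_l, b_l$ to a parallel fiber copy disjoint from $T_1, T_2$. Since $\pi_1(\overline{Z}_m) = 1$, they become null-homotopic. The only subtlety is that a curve such as $a_5$ may be needed in its position on the torus. Its push-off $a_5'$ (or a fiber translate) still lies in the complement, and is homotopic there to a curve in $\overline{Z}_m$. The meridians are commutators of these loops and so die as well. Van Kampen then gives a trivial fundamental group. We need $m\ge 5$ so that the handle $a_5, b_5$ exists.

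\textbf{Step 3 (second statement).} For $\pi_1(Z_m - (T_1\cup T_2\cup T_3))$ we do not remove $\Sigma$. Now $K \cup B \cup C$ is a quotient of $\overline{Z}_m$'s fundamental group, hence also trivial. The extra torus $T_3 = x_1 \times a_6$ uses the sixth handle, which requires $m\ge 6$. Its meridian $[y_1, b_6]$ is again a commutator of loops that die, so the same argument applies.

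The main obstacle is Step 2. One has to check carefully that the base loops $x_1, y_1$, which lie in the tori $T_1$ and $T_2$, can be homotoped off those tori inside the complement while remaining in the region identified with $\overline{Z}_m$. This requires choosing the translated copies $a_5'$ and the fiber positions so that $x_1$ (in a fiber away from $T_2$) and $y_1$ (away from $T_1$) are disjoint from the removed tori. I would handle it by taking $T_1$ and $T_2$ over disjoint parallel copies of $x_1, y_1$ in the base torus, so that a third parallel copy of each base loop avoids both tori.
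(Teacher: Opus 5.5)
Your proposal is correct and is essentially the paper's argument: the meridians of the removed tori are commutators of loops on punctured dual tori, and all remaining generators ($x_j,y_j,a_i,b_i$) can be moved into $\overline{Z}_m=K\cup B\cup(C-\Sigma)$, which is simply connected by Lemma \ref{Pi1} and lies inside the complement. Your explicit Van Kampen splitting along $\partial(A-K)$ spells out what the paper leaves implicit, and it handles the meridian of $\Sigma$ automatically because $\Sigma$ is already removed in $\overline{Z}_m$, whereas the paper kills that meridian using the dual fiber.
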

\begin{proof}
    Notice that we need the assumptions on $m$ to ensure that we are able to find the tori $T_1$, $T_2$, $T_3$ in $A-K$, each one with a dual torus, as described in Section \ref{subsectManifldZm}(\ref{newtori}). Notice that also $\Sigma$ admits a dual surface, descending from the fiber in $\textgoth{Z}_m$.\\
    The key point is that, when removing a torus or the surface $\Sigma$, we introduce meridians of the surfaces as potential new generators in the fundamental group, but these turn out to be null-homotopic. This is because of the presence of dual surfaces: for example, since $\Sigma$ is dual to the fiber of $\textgoth{Z}_m$, the meridian of $\Sigma$ can be expressed as a product of commutators $[a_1,b_1][a_2,b_2]\dots[a_{m+1},b_{m+1}]$. However, since $\pi_1(Z_m-\Sigma)=1$, each of these elements is null-homotopic in the complement of $\Sigma$, and thus so is the meridian.
    \\ So, the fundamental groups of $Z_{m}-(\Sigma \cup T_1 \cup T_2)$ and of $Z_{m}-(T_1 \cup T_2 \cup T_3)$ are still generated by $x_j, y_j, a_i, b_i$.
By Lemma \ref{Pi1}, these cycles are null-homotopic in $\overline{Z}_m=K\cup B_0 \cup (C_0 - \Sigma)$, and, since $\overline{Z}_m\subset Z_{m}-(\Sigma \cup T_1 \cup T_2)$ (and similarly for $Z_{m}-(T_1 \cup T_2\cup T_3)$), we conclude.
\end{proof}

By the above Corollary, since the tori $T_1$, $T_2$, and $T_3$ are disjoint, we can perform knot surgery on one of them to get exotic copies $Z_{m,J}$ of $Z_m$ (see section \ref{subhowgetexotica}), and the other two tori will survive in $Z_{m,J}$.\\
Remember also that we get the exotic copies $Z_{m,k}$ by torus surgery on a torus which is disjoint from the tori $T_1$ and $T_2$ of Corollary \ref{cor1}, see Section \ref{subhowgetexotica}.

\begin{corollary}\label{cor2}
  Assume the notation of Lemma \ref{Pi1} and Corollary \ref{cor1}, and let $T_1=x_1\times a_5$, $T_2=y_1\times a_5'$. Let $Z_{m,k}$ and $Z_{m,J}$ be as in Section \ref{subhowgetexotica}. Then, for $m\geq5$, $\pi_1(Z_{m,k}-(\Sigma \cup T_1))=1$ and
    $\pi_1(Z_{m,k}-(T_1 \cup T_2))=1$.\\
    Moreover, $\pi_1(Z_{m,J}-(\Sigma \cup T_1))=1$ for $m\geq5$, while
    $\pi_1(Z_{m,J}-(T_1 \cup T_2))=1$ for $m \geq 6$.
    \end{corollary}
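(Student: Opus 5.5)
The plan is to reduce everything to Lemma \ref{Pi1} and Corollary \ref{cor1}. The key observation is that the surgeries producing $Z_{m,k}$ and $Z_{m,J}$ take place away from the simply connected piece $\overline{Z}_m=K\cup B\cup(C-\Sigma)$. For $Z_{m,k}$, the surgery torus $T$ is the core of the surgery on $x_2\times a_{m+1}$, and it lies in $B\cup C$. So the first step is to check its position relative to $\overline{Z}_m$. I would first try to move $T$ (or choose the Luttinger tori) so that it is disjoint from $K$, from $\Sigma$ and from $T_1,T_2\subset A-K$. If $T$ cannot be made disjoint from $\overline{Z}_m$, I would instead use the torus surgery formula for $\pi_1$ in \eqref{eq}. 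That formula shows that $\pi_1(Z_{m,k}-(\Sigma\cup T_1))$ is a quotient of $\pi_1(Z_m-(\Sigma\cup T_1\cup T))$, where we add the relation $\mu\lambda^k=1$ with $\lambda=x_2$.

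Next I would handle the meridians, as in the proof of Corollary \ref{cor1}. Removing $T_1$, $T_2$ or $\Sigma$ adds their meridians as possible new generators. Each of these surfaces has a dual surface (the dual Lagrangian torus, or the fiber for $\Sigma$) that meets it once and misses the others. Hence each meridian is a product of commutators of loops in the punctured dual surface, and those loops are among the generators $a_i,b_i,x_j,y_j$. The surgery torus $T$ needs the same treatment. Its meridian $\mu$ is identified with $\lambda^{-k}=x_2^{-k}$ (up to the framing convention), and $T$ also has a dual torus coming from the dual of $x_2\times a_{m+1}$, which persists after surgery. So $\pi_1(Z_{m,k}-(\Sigma\cup T_1))$ is still generated by the images of $x_j,y_j,a_i,b_i$. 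These all come from $\overline{Z}_m$, or from its analogue after the $1/k$ surgery: Lemma \ref{Pi1} uses only the vanishing cycles in $K$ together with the relations from $B$ and $C-\Sigma$, and changing the coefficient of one Luttinger surgery from $1$ to $1/k$ only replaces a relation $\mu\lambda=1$ by $\mu\lambda^k=1$. The point to verify here is that the argument of Proposition \ref{BHprop2} and Corollary \ref{corBH} still gives the normal generation statement with this coefficient. That would settle $\Sigma\cup T_1$ and, in the same way, $T_1\cup T_2$ for $m\ge5$.

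For $Z_{m,J}$, knot surgery is done on a third torus. With $T_1,T_2$ as in Corollary \ref{cor1}, I take $T_3=x_1\times a_6$ for the pair $T_1\cup T_2$, which needs $m\ge6$. For the pair $\Sigma\cup T_1$, the torus $T_2=y_1\times a_5'$ is available when $m\ge5$. By Corollary \ref{cor1} the complement of the three removed surfaces is simply connected. Knot surgery glues in $(S^3-\nu K)\times S^1$, and its fundamental group is normally generated by the meridian of $K$. Under the gluing this meridian is identified with a curve of the boundary torus that comes from the complement (the longitude of $K$ goes to the meridian of $T$). Van Kampen then gives a group normally generated by the image of $\pi_1$ of a simply connected space, so it is trivial.

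I expect the main obstacle to be the $Z_{m,k}$ case. One has to confirm that the $1/k$ surgery torus lies inside $B\cup C$ and misses $T_1$, $T_2$ and $\Sigma$, and that the normal generation arguments of Corollary \ref{corBH} survive the change of coefficient. I would first try to avoid redoing the latter by observing that the relation $\mu\lambda^k=1$, combined with $\mu$ being trivial via the dual torus, still forces $x_2^k=1$. Together with the cyclic-quotient structure coming from the vanishing cycles, this should still kill the group, but it needs to be checked.
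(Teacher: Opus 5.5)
The $Z_{m,J}$ part of your proposal is the paper's argument. Corollary \ref{cor1} gives a simply connected complement of three surfaces, and regluing $(S^3-\nu K)\times S^1$ along $T^3$ keeps $\pi_1$ trivial. Strictly, $\pi_1((S^3-\nu K)\times S^1)$ is normally generated by the meridian of $K$ together with the $S^1$ factor, but both are boundary curves, so your conclusion stands.

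The gap is in the $Z_{m,k}$ case. You leave it open (``it needs to be checked''), and the mechanism you sketch cannot work.

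Let $\Lambda=x_2\times a_{m+1}$ be the Lagrangian torus in the manifold $M_0$ before the last Luttinger surgery. Then $Z_m-\nu T=M_0-\nu\Lambda$, and on the boundary $3$-torus $\mu_T=\mu_\Lambda\lambda^{\pm1}$.

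\begin{itemize}
\item The core torus $T$ is null-homologous in $Z_m$ (see the remark after Definition \ref{def1}), so it has no dual surface. The dual torus of $x_2\times a_{m+1}$ that you invoke caps off $\mu_\Lambda$, not $\mu_T$.
\item Since $Z_m$ is simply connected, $H_1(Z_m-\nu T)\cong\Z$ is generated by $\mu_T$. So $\mu_T$ is not a product of commutators there, and you cannot trivialize it as you do for $\Sigma$, $T_1$, $T_2$.
\item $\mu_\Lambda$ is a commutator of the dual loops, hence null-homologous. Therefore the Lagrangian push-off $\lambda$ maps to $\pm$ a generator of this $\Z$.
\item Consequently your relation $\mu_T\lambda^k=1$, with $\lambda=x_2$ the Lagrangian push-off, would give $H_1\cong\Z/|k\pm1|$. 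The torsion coming from $x_2^k=1$ is genuine, and no cyclic-quotient argument will remove it.
\end{itemize}

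The missing idea, which is exactly what the paper's proof uses, is to read the surgery from the side of $\Lambda$.

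\begin{itemize}
\item The family of Theorem \ref{Teoreverse} uses the null-homologous framing of $T$. In that framing the push-off of $\lambda$ is $\lambda\mu_T^{\mp1}=\mu_\Lambda^{\mp1}$, i.e.\ the old meridian.
\item Hence in $\pi_1(M_0-\nu\Lambda)$ the new relation has the form $\mu_\Lambda^{\,j}\lambda^{\pm1}=1$ with $j=1\mp k$. This is the paper's $\mu^k\lambda=1$: the power sits on the meridian, not on $\lambda$.
\item Since $\mu_\Lambda$ is a commutator of the dual loops, it is trivialized by the other relations, as in the Baykur--Hamada computation. So the relation is equivalent to $\lambda=1$, which is exactly what the Luttinger relation $\mu_\Lambda\lambda=1$ yields.
\end{itemize}

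The arguments of Lemma \ref{Pi1} and Corollary \ref{cor1} then go through unchanged. There is no torsion to handle, and no need to redo Corollary \ref{corBH} for a new coefficient.
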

\begin{proof}
For what concerns $Z_{m,k}$, we use the same argument as in Lemma \ref{Pi1}. The only difference is a non-Luttinger surgery on a torus in $\overline{Z}_m$: this means that one of the relations introduced in the fundamental group becomes of the form $(\mu^k) \lambda=1$, instead of the one in Equation \eqref{eq1}. In fact, $k$ is both the surgery coefficient (understood as $\frac{k}{1}$) and the parameter of the family. However, the argument goes through in the same exact way. This is standard, used by Baykur and Hamada in their proof and also used many times before, see for example \cite{fintushel2007reverse}.
 (The key point is that the meridian $\mu$ is null-homotopic, and this is due to the other relations in the fundamental group. So, the relation introduced by non-Luttinger surgery is equivalent to the one introduced by Luttinger surgery).\\

Regarding $Z_{m,J}$, using the hypothesis on $m$, we can do knot surgery on $T_2$ when deleting $\Sigma$ and $T_1$ (resp. on $T_3=x_1\times a_6$ when deleting $T_1$ and $T_2$). Then, we use the fact that $\pi_1(Z_{m}-(\Sigma \cup T_1 \cup T_2))=1$ (resp. $\pi_1(Z_{m}-(T_1 \cup T_2\cup T_3))=1$), proven in Corollary \ref{cor1}, to deduce that, after knot-surgery along $T_2$ (resp. $T_3$), we still get simply-connected manifolds.
\end{proof}

\section{A more general Theorem}
In this section we will prove the following Theorem, from which we will deduce Theorem \ref{Teo1}.
\begin{theorem}\label{Teo4}
    Consider a simply connected symplectic manifold $Z$ with $b_2^+(Z)>1$ and with disjointly embedded symplectic connected surfaces $S_1$ and $S_2$, both of zero self-intersection.
    Suppose that you are in one of the two following cases:
    \begin{enumerate}
        \item\label{Hyp1} There exists a null-homologous embedded torus $T$ in $Z-(S_1\cup S_2)$, which contains a curve $\lambda$ null-homologous in $Z-T$. Assume that the manifold $Z_{T,\lambda}(0)$ is symplectic,
        and that the core torus of the surgery $T_0$ admits a dual torus with zero self-intersection. 
        If $\{Z_{k}\}$ denotes the family of exotic manifolds homeomorphic to $Z$, with $Z_1=Z$, obtained as explained in Theorem \ref{Teoreverse}, assume that $\pi_1(Z_k-(S_1 \cup S_2))=1$. 
        \item\label{Hyp2} There exists a torus $T$ embedded in $Z-(S_1 \cup S_2)$, with $T\cdot T=0$, and with a dual surface $T^*$. 
        \\Assume that, for each $\gamma$ simple curve in $S_1$ or $S_2$, there exist a surface $F_\gamma\subset Z -(T\cup T^*)$ with $\partial F_\gamma=\gamma$, that intersect $S_1$ and $S_2$ only on the boundary. \\Lastly, if $Z_J$ denotes the knot surgery on $T\subset Z$ with knot $J$, assume that
        $\pi_1(Z_J-(S_1 \cup S_2))=1$.
    \end{enumerate} 
    Then there exists an infinite family $\{Y_{k}\}$ (respectively $\{Y_J\}$) of pairwise not diffeomorphic manifolds associated to $Z$ with fundamental group isomorphic to $D_\infty$, and with the same equivariant intersection form. In case $(2)$, the manifolds are homeomorphic but pairwise not diffeomorphic.
\end{theorem}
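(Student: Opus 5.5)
The plan is to build a manifold $W$ with $\pi_1(W)\cong\Z$ that carries a free orientation-preserving involution, and then take $Y=W/\iota$. Take two copies $Z^{(1)}, Z^{(2)}$ of $Z$, or of $Z_k$ / $Z_J$. Let $S_1,S_2$ have common genus $g$; if the genera differ, use two copies of $Z$ arranged so that corresponding surfaces match up. Form the ``circular'' normal connected sum: glue $Z^{(1)}-\nu(S_1\cup S_2)$ to $Z^{(2)}-\nu(S_1\cup S_2)$ by identifying $\partial\nu S_1^{(1)}$ with $\partial\nu S_1^{(2)}$ and $\partial\nu S_2^{(1)}$ with $\partial\nu S_2^{(2)}$. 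The gluing maps are chosen as in Gompf's construction \cite[Theorem 1.3]{gompf1995new}, so the result $W$ is symplectic. The involution swaps the two copies. To make it free, compose the swap on each $\Sigma_g\times S^1$ gluing region with the antipodal map on the $S^1$ factor, reflected appropriately. More concretely, realise $W$ as the quotient of a $\Z$-indexed chain of copies of $Z-\nu(S_1\cup S_2)$, where copy $n$ is glued to copy $n+1$ alternately along $S_1$ and $S_2$. Then $D_\infty$ acts freely: reflections through the midpoints of the two types of gluing tubes act as orientation-preserving free involutions on $\Sigma_g\times S^1\times[-1,1]$, via $(x,\theta,t)\mapsto(x,-\theta,-t)$ on the relevant neck, which has no fixed points when paired with $\theta\mapsto\theta+\pi$. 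Van Kampen together with $\pi_1(Z-(S_1\cup S_2))=1$ (the hypothesis) gives $\pi_1(W)\cong\Z$, coming from the circle of gluings, and $\pi_1(Y)\cong D_\infty$ with $W$ the double cover of Subsection \ref{doublecover}.

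For the family, perform the surgery (torus surgery $1/k$ in case (\ref{Hyp1}), knot surgery with $J$ in case (\ref{Hyp2})) equivariantly. Do it on $T$ in one copy and on its image under the involution in the other copy; downstairs this is a single surgery on the image of $T$ in $Y$. This gives $Y_k$ (resp.\ $Y_J$) with double covers $W_k$ (resp.\ $W_J$). The hypotheses $\pi_1(Z_k-(S_1\cup S_2))=1$ and $\pi_1(Z_J-(S_1\cup S_2))=1$ keep $\pi_1$ equal to $D_\infty$.

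To distinguish the $Y$'s, I compare the Seiberg--Witten invariants of the unique $\Z$-double covers. $W$ has $b_1=1$, and its $b_2^+$ can be arranged so that invariants are well defined; I would use the chamber or small-perturbation invariants for $b_2^+>1$. In case (\ref{Hyp2}), the product formula \eqref{knotsurgeryformula} applied twice gives $SW_{W_J}=SW_W\cdot\Delta_J(e^{2T})\Delta_J(e^{2T'})$. Since $SW_W\neq0$ by Taubes, distinct Alexander polynomials give infinitely many distinct invariants. Here the surfaces $F_\gamma$ are used to check that $[T]$ is nonzero and primitive in $H_2(W)$, so that the polynomial change is visible. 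In case (\ref{Hyp1}), apply \eqref{EqMMS} as in Theorem \ref{Teoreverse}. The null-homology of $\lambda$ survives in $W$. The symplectic manifold $W_{T,\lambda}(0)$, obtained by the same gluing of $Z_{T,\lambda}(0)$, has nonzero invariant, and the dual torus of $T_0$ keeps $[T_0]$ nonzero in $W$. This gives invariants growing linearly in $k$, hence infinitely many diffeomorphism types.

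It remains to show the equivariant intersection forms agree. $\pi_2(Y)=H_2(\widetilde W)$, and the surgery region $T\times D^2$ is replaced by a piece with the same homology relative to boundary: $(S^3-\nu J)\times S^1$ is a homology $T^2\times D^2$, and the $1/k$ torus surgery along a null-homologous curve is similar. A Mayer--Vietoris argument on the $\Z$-cover, lifted region by region, then gives a $\Z[D_\infty]$-isometry $\pi_2(Y)\cong\pi_2(Y_J)$. In case (\ref{Hyp2}) I would additionally match the $k$-invariants. The idea is that the surgery is performed in a region whose inclusion is $\pi_1$-trivial on both sides, so there is a degree-one map $Y_J\to Y$ (collapsing $(S^3-\nu J)\to S^1\times D^2$) that induces the isometry on $\pi_2$ and is compatible with the Postnikov $2$-stages. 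Since all manifolds have the same universal-cover spin type, Theorem \ref{ThankyouMark} gives homeomorphism. I expect the main obstacle to be the construction of the free involution together with the verification that the $\pi_2$ isometry respects $k$-invariants; the degree-one collapse map is what I would try first for the latter.
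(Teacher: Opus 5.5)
\textbf{The free involution, the step you flagged, fails as written.} Your architecture matches the paper's: two copies of $Z$ glued along $S_1$ and $S_2$, the swap involution, the universal cover as a $D_\infty$-chain of copies of $Z-\nu(S_1\cup S_2)$, equivariant surgery, Seiberg--Witten invariants of the $\Z$ double cover, and the degree-one collapse map for the $k$-invariant.

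Identify both boundary copies with $S\times S^1$ in the same way, and let $\sigma$ be the identity from one copy to the other. Then $\sigma$ is well defined iff the gluing map $\phi$ is an involution. It is free iff $\phi$ has no fixed points. $W$ is oriented with $\sigma$ orientation-preserving iff $\phi$ reverses the orientation of $S\times S^1$.

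Your neck map $(x,\theta,t)\mapsto(x,-\theta,-t)$ restricts to $\phi(x,\theta)=(x,-\theta)$, which fixes $S\times\{0,\pi\}$. Composing with $\theta\mapsto\theta+\pi$ gives either $(x,\pi-\theta)$, again a reflection with fixed points, or $(x,\theta+\pi)$, which preserves orientation, so $W$ or $W/\sigma$ is no longer oriented. Every orientation-reversing involution of $S^1$ has fixed points, so no $\phi$ that is the identity on the surface factor works; you must act on $S$. The paper takes $\phi=\alpha\times\mathrm{id}_{S^1}$ with $\alpha$ a free orientation-reversing involution of $S$ (a reflection composed with a rotation by $\pi$).

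\textbf{This conflicts with your appeal to Gompf's sum.} With the same $\omega$ on both copies, Gompf's recipe needs an orientation-preserving identification of the surfaces and a fibrewise orientation-reversing map on meridians. Freeness would then force a free orientation-preserving involution of $S$, which does not exist when $g(S)$ is even, e.g.\ the genus-$2$ section $\Sigma$ used later. The missing idea is to give the second copy the form $-\omega$. Then $S_1'$ with reversed orientation is symplectic, $\alpha$ becomes orientation-preserving $S_1\to -S_1'$, and the identity on meridians reverses the (now reversed) normal orientation. So $W$ is symplectic and $\sigma$ is free, though anti-symplectic.

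\textbf{Case (1) Seiberg--Witten count.} You perform two surgeries on $W$, so applying \eqref{EqMMS} twice gives $SW_{W_k}(s_{1/k})=a+bk+ck^2$ with $c=SW_{Z(0)\#Z(0)}(s_{0,0})$. The dual torus of $T_0$ is what collapses each sum over $s_0+2i[T_0]$ to at most one basic class, via adjunction. Then $c\neq 0$ because $Z(0)\#_{S_1,S_2}Z(0)$ is symplectic. Your ``linear growth'' would need $b$, a sum of two canonical-class invariants, to be nonzero, which you have not justified, so argue from the $k^2$ term. Also, $b_2^+(W)>1$ is automatic from $\chi(W)\ge 2\chi(Z)$ and $\sigma(W)=2\sigma(Z)$; nothing needs arranging.

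\textbf{Your isometry argument is a valid alternative.} You take the route of homology rel boundary; more cleanly, the lifted collapse map $\widetilde{Y}_J\to\widetilde{Y}$ is a proper equivariant degree-one map that is an isomorphism on $H_2$, hence an isometry by naturality of cap products. The paper instead uses the surfaces $F_\gamma$ to keep $T,T^*$ dual and orthogonal to the classes created at each gluing, not to show $[T]$ is primitive.
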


    \begin{remark}
        In Case $(1)$ of Theorem \ref{Teo4}, we can construct the family of exotic manifolds $\{Z_k\}$ using Theorem \ref{Teoreverse} because the assumption on the non-triviality of the Seiberg-Witten invariants is satisfied. This is due to the presence of the dual torus of $T_0$; see \cite[Corollary 2]{fintushel2007reverse}.
    \end{remark}

The proof will go through the following steps (Figure \ref{schematic} describes schematically the manifolds involved):

\begin{enumerate}
    \item Consider the manifold $Z$ and another copy $Z'$, and remove a tubular neighborhoods of $S_1$ and its copy $S_1'$. Then glue the boundaries with the map displayed in Figure \ref{involution} (in the $g(S_1)=2$ case). This produces a simply connected manifold $X$ with a free orientation-preserving involution.
    \item Remove neighborhoods of $S_2$ and $S_2'$, and glue the boundaries similarly. We obtain a manifold $W$ with fundamental group isomorphic to $\Z$, and still endowed with a free involution.
    \item The quotient with respect to such an involution has fundamental group isomorphic to $D_\infty$.
    \item Torus surgery and knot-surgery change the diffeomorphism type but not the equivariant intersection form, and in the case of knot surgery we will obtain infinitely many exotic structures on our manifolds.
\end{enumerate}

\begin{figure}
   \def\svgwidth{0,6\columnwidth}
    \centering
    
    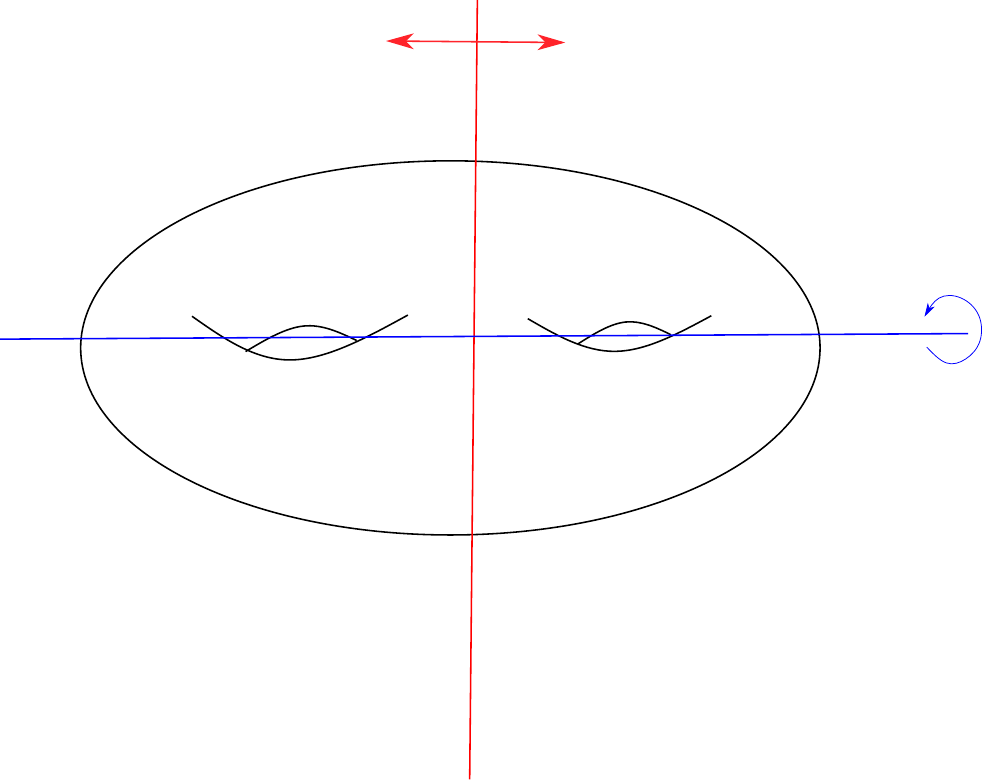

    \caption{When gluing two manifolds with boundary parametrized by $S\times S^1$, where $S$ is a surface, we will always use the map given by the composition between the reflection along the red plane and the $\pi$-degree rotation along the blue axis on $S$, and the identity on $S^1$.}
    \label{involution}
\end{figure}

\begin{proof}[Proof of Theorem \ref{Teo4}]
\begin{figure}[h]
    \def\svgwidth{\columnwidth}
    \centering
    
    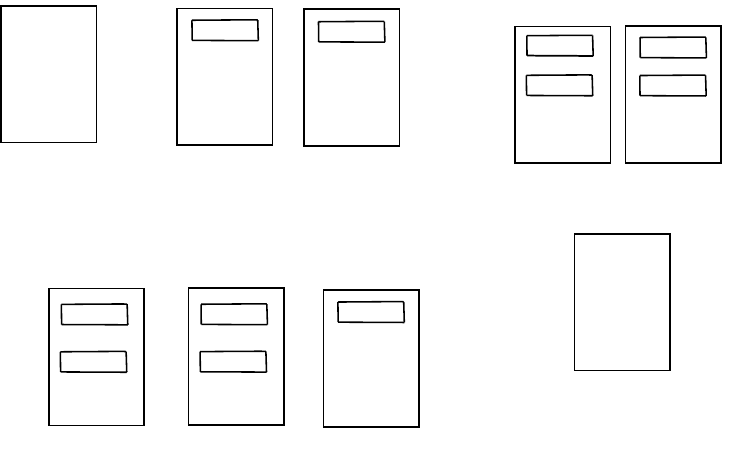

    \caption{A schematic representation of the manifolds $Z$, $X$, $W$, $Y$, and $\tilde{Y}$.}
\label{schematic}
\end{figure}
\textbf{Step (1)}. 

Consider two copies of $Z$, and remove from each one a neighborhood of $S_1$. We glue the boundary components using the map displayed in Figure \ref{involution}, to get an oriented manifold denoted by $X$ (so, $X$ is obtained by performing normal connected sum on two copies of $Z$ across the symplectic submanifold $S_1$). Then, $X$ is simply connected (this easily follows from Van-Kampen Theorem and the fact that $Z-\nu S_1$ is simply-connected) and admits an orientation-preserving free involution $\sigma$, given by the identification of the two $Z$ blocks. The idea of this construction is motivated by (\cite[section $2.2$]{stszbayk}). Note that the involution is free because of the choice of the gluing map, which is a free involution.\\

\textbf{Step (2)}.
Now, remove neighborhoods of $S_2$ and $S_2'$, and we get a simply connected manifold with two diffeomorphic boundary components (simply-connectedness follows from the hypothesis on the complement of the two surfaces). When gluing these (again using the map in Figure \ref{involution}), we will obtain a manifold $W$ with fundamental group isomorphic to $\Z$:

\begin{lemma}\label{cyclic-pi1}
    Let $X$ be a simply connected manifold with two boundary components $M_1$ and $M_2$, and let $\phi: M_1 \rightarrow M_2$ be an orientation-reversing diffeomorphism, where each $M_i$ inherits an orientation from $X$. Then gluing the boundary components via $\phi$ produces a closed oriented manifold $W$ with infinite cyclic fundamental group. A generator is a loop $s$ that lifts to a curve $\Tilde{s}$ connecting a point in $M_1$ to a point in $M_2$.
\end{lemma}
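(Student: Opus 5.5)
We prove Lemma \ref{cyclic-pi1} with the Seifert--van Kampen theorem in its HNN form, or equivalently with covering spaces. Implicitly $X$ is compact and connected, and each boundary component $M_i$ is connected, as in the application where $M_i\cong S_2\times S^1$. Since $\phi$ reverses the induced boundary orientations, the orientation of $X$ descends to the quotient $W=X/(x\sim\phi(x))$. The image $M\subset W$ of $M_1\cong M_2$ is a two-sided hypersurface with a collar, and $W$ is closed because $\partial X=M_1\sqcup M_2$ is entirely identified. This settles the first claim.

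For the fundamental group, we take a collar neighborhood $N\cong M\times(-1,1)$ of $M$ in $W$. The complement $W-M$ deformation retracts onto $X$, and $W$ is obtained from $X$ by attaching $M\times[0,1]$ along both ends. The cleanest route is the HNN version of van Kampen, which gives
\[
\pi_1(W)\cong \bigl(\pi_1(X)*\langle s\rangle\bigr)\big/\bigl\langle\!\langle\, s\,\iota_1(g)\,s^{-1}=\iota_2(\phi_*g),\ g\in\pi_1(M_1)\,\rangle\!\rangle.
\]
Here $\iota_1,\iota_2$ are induced by the boundary inclusions, after fixing base points in $M_1$ and in $M_2$ joined by a path $\tilde s$ in $X$. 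The loop $s$ is the image of $\tilde s$, which closes up in $W$ because its endpoint $p\in M_1$ and $\phi(p)\in M_2$ are identified. Since $\pi_1(X)=1$, both sides of every relation are trivial, and $\pi_1(W)\cong\langle s\rangle$ is a priori a quotient of $\Z$.

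The remaining point, and the only one needing care, is that $s$ has infinite order. For this we write down an infinite cyclic cover explicitly. We take copies $X_n$ of $X$ for $n\in\Z$ and glue $M_2\subset X_n$ to $M_1\subset X_{n+1}$ via $\phi^{-1}$. The result $\widetilde W$ is connected, the shift $X_n\mapsto X_{n+1}$ acts freely with quotient $W$, and so $\widetilde W\to W$ is a regular covering with deck group $\Z$. The lift of $s$ starting in $X_0$ ends in $X_1$, so $s$ maps to the generator $1\in\Z$ under $\pi_1(W)\to\Z$. This map is therefore surjective. Combined with the previous step, $\pi_1(W)\cong\Z$, generated by $s$.

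The same cover also shows that $\widetilde W$ is simply connected: by van Kampen it is an increasing union of simply connected pieces glued along connected $M_i$. Hence $\widetilde W$ is the universal cover, which confirms the description of the generator as the loop whose lift joins $M_1$ to $M_2$. I expect no real obstacle beyond keeping track of orientation conventions and base points.
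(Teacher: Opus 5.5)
Your proof is correct and is essentially the paper's argument: the paper applies ordinary van Kampen to $A$ (a closed collar of $M$ together with a neighborhood of $s$, so $A\simeq M\vee S^1$) and $B$ (the closure of the complement of the collar, which is a copy of $X$), and this is exactly the decomposition behind your HNN formula; the resulting pushout is $\Z$ generated by $s$ outright. Your hedge ``a priori a quotient of $\Z$'' is unnecessary, because once $\pi_1(X)=1$ every HNN relation collapses to $1=1$ and the presentation is already free on $s$. The explicit $\Z$-cover is therefore a redundant but harmless check of infinite order, though it does usefully exhibit the universal cover that the paper uses later in Lemma~\ref{Lemma4.2}.
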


\begin{proof}
    $M_1$ and $M_2$ are identified in $W$, so we call their image $M\subset W$. We use Van-Kampen Theorem on a piece $A \subset W$, given by the union of a neighborhood of $s$ and a closed collar of $M$, and a piece $B$, given by the closure of the complement of the collar. Then $B$ is simply connected, while $A$ is homotopy equivalent to $M \vee S^1$. In particular, $\pi_1(A) \cong \Z * \pi_1(M)$. The intersection $A \cap B$ is given by two push-offs of $M$ connected by (a sub-path of) $s$, and its fundamental group will kill the contribution from $\pi_1(M)$ in $\pi_1(A)*\pi_1(B)$.
\end{proof}
This lemma also tells us that a generator of $\pi_1(W)$ is a loop connecting $S_1$ and $S_2$ in the first block and $S_1'$ and $S_2'$ in the second.\\

The involution $\sigma$ described before extends after the gluing, so $W$ admits also an orientation-preserving free involution, which we still denote $\sigma$.


\begin{remark}
    The manifold $W$ admits a symplectic structure, since it is built from $Z$, which admits a symplectic structure $\omega$, by performing normal connected sums along symplectic submanifolds. Notice that the gluing map must be induced by a symplectomorphism of the surfaces $S_1$ and $S_1'$, as prescribed in \cite{gompf1995new}. Now, the map shown in Figure \ref{involution} is a diffeomorphism between $S_1$ and $-S_1'$ (since it reverses the orientation), and so, in order for $-S_1'$ to be a symplectic surface in $Z'$, we have to endow $Z'$ with the symplectic structure $-\omega$.\\
\end{remark}

Since $W$ is symplectic, the Seiberg-Witten invariant of $W$ is non-vanishing when $b_2^+(W)>1$. Notice that, by construction, and since $b_1(W)=1$, we have 
$$\chi(W)=b_2^+(W)+b_2^-(W) \geq 2\chi(Z)=2(2+b_2^+(Z)+b_2^-(Z)),$$

(in the inequality we used that $S_1$ and $S_2$ cannot be spheres, since they have zero self-intersection and $b_2^+(Z)>1$) and from $$\sigma(W)=b_2^+(W)-b_2^-(W)=2\sigma(Z)=2(b_2^+(Z)-b_2^-(Z)),$$ one easily deduces that $b_2^+(W)>1$ always hold.
\bigskip

\textbf{Step (3)}. Let $Y$ denote the quotient of $W$ by the involution $\sigma$. The following shows that its fundamental group is isomorphic to the infinite dihedral group.

\begin{lemma}\label{Lemma4.2}
    The manifold $Y=W/\sigma$ has fundamental group isomorphic to the infinite dihedral group.
\end{lemma}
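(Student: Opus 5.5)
Since $\sigma$ acts freely on $W$, the projection $W\to Y$ is a regular double covering, so there is a short exact sequence
\begin{equation*}
1\rightarrow \pi_1(W)\cong\Z \rightarrow \pi_1(Y)\rightarrow \Z_2\rightarrow 1,
\end{equation*}
where the identification $\pi_1(W)\cong\Z$ comes from Lemma \ref{cyclic-pi1}. A group containing an index two subgroup isomorphic to $\Z$ is one of four groups: $\Z$ (as $2\Z\subset\Z$), $\Z\times\Z_2$, $D_\infty$, or $\Z\rtimes\Z_2$ with the trivial action extended nontrivially, which is again $\Z$. So the plan is to decide two things: whether the conjugation action of a lift of the deck transformation on $\pi_1(W)$ is $s\mapsto s^{-1}$, and whether the extension splits. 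Inversion together with a splitting gives $D_\infty$.

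\textbf{Step 1: the action is inversion.} By Lemma \ref{cyclic-pi1}, the generator $s$ of $\pi_1(W)$ is a loop that runs from $S_1$ to $S_2$ inside the first copy $Z-\nu(S_1\cup S_2)$. It then crosses the gluing region along $S_2\times S^1$, runs from $S_2'$ back to $S_1'$ inside the second copy $Z'$, and finally crosses back through the $S_1$ gluing region. The involution $\sigma$ exchanges the two copies of $Z$, so it carries the arc in $Z$ going from $S_1$ to $S_2$ onto the corresponding arc in $Z'$ going from $S_1'$ to $S_2'$. The loop $s$ traverses that arc in the opposite direction. Hence $\sigma(s)$ is freely homotopic to $s^{-1}$. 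To make this precise I would use the homomorphism $\pi_1(W)\to\Z$ given by the algebraic intersection number with the hypersurface $M=\partial\nu S_1$, which is the map that detects the generator. Then I would check that $\sigma$ sends $M$ to itself while reversing its coorientation, because it exchanges the two sides $Z$ and $Z'$. This shows that $\sigma_*$ acts as $-1$ on $H_1(W)\cong\Z$. It follows that $\pi_1(Y)$ is nonabelian, which excludes $\Z$ and $\Z\times\Z_2$.

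\textbf{Step 2: the extension splits.} I need an element of order $2$ in $\pi_1(Y)$ that maps nontrivially to $\Z_2$. Once again I would look at the gluing region $M\cong S_1\times S^1$. The gluing map of Figure \ref{involution} is a free involution on $S_1\times S^1$: on the $S_1$ factor it is the composition of the reflection with the $\pi$-rotation, and it is the identity on $S^1$. Take a point $p$ of $M$ and a path in $M$ from $p$ to $\sigma(p)$, and let $\gamma$ be its image in $Y$. I plan to compute $\gamma^2$ by using a representative that is adapted to $\sigma$. One option is a short arc crossing the collar of $M$, whose square becomes trivial once the involution is properly understood. A cleaner option is to apply Van Kampen to $Y$ directly: $Y$ is the union of the image of $Z-\nu(S_1\cup S_2)$, which is simply connected by hypothesis, and the two quotients $\nu M_i/\sigma$. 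Each $\nu M_i/\sigma$ is the mapping cylinder of a double cover $M_i\to M_i/\sigma$ and therefore contributes a generator of order $2$. The result is $\pi_1(Y)\cong\Z_2*\Z_2\cong D_\infty$. Here the simple connectivity of $Z-\nu(S_1\cup S_2)$, whose image is a fundamental domain, kills every element coming from $\pi_1(M_i/\sigma)$ apart from these contributions.

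\textbf{Main obstacle.} The difficult part is setting up this Van Kampen decomposition of $Y$ correctly. In $Y$, the image of $\nu M_i$ has to be described as a neighborhood of the one-sided hypersurface $M_i/\sigma$, that is, as a twisted $I$-bundle. Then one must verify that each such piece adds exactly one relation of order $2$, and that the simply connected fundamental domain kills $\pi_1(M_i/\sigma)$. This gives the presentation $\langle \sigma_1,\sigma_2\mid \sigma_1^2=\sigma_2^2=1\rangle$, which is the description $\Z_2*\Z_2$ of $D_\infty$.
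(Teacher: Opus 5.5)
Your argument is correct, but it takes a different route from the paper. The paper works in the universal cover. It describes $\widetilde{Y}$ as an infinite chain of copies of $Z-\nu(S_1\cup S_2)$, glued alternately along $\partial\nu S_1$ and $\partial\nu S_2$, and asserts that this chain is simply connected. It then exhibits the deck group as generated by a two-block translation $\tau$ and the reflection $\sigma$, with $\sigma\tau\sigma=\tau^{-1}$.

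You work with the double cover $W\to Y$ instead. Your Step 1 already finishes the proof when combined with the classification of extensions of $\Z_2$ by $\Z$: the involution preserves the gluing hypersurface $M$ but reverses its coorientation, so it acts by $-1$ on $H_1(W)\cong\pi_1(W)\cong\Z$.

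Step 2 is unnecessary. If $tst^{-1}=s^{-1}$ and $t^2=s^n$, then conjugating $t^2$ by $t$ gives $s^n=s^{-n}$, so $n=0$ and the extension splits automatically. Your candidate list should simply read $\Z$, $\Z\times\Z_2$, $D_\infty$; your ``fourth'' group is the non-split extension with trivial action, which is $\Z$, and it is not a semidirect product.

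If you want the order-$2$ element explicitly, the ``short arc'' phrasing is off, because $\sigma(p)$ is generally far from $p$ in $M$. The clean version is this: take a path $\delta$ in $M$ from $p$ to $\sigma(p)$. The lift of $\gamma^2$ is the loop $\delta\cdot\sigma(\delta)\subset M$. This loop is null-homotopic because $M$ is parallel to the boundary of the simply connected $Z-\nu(S_1\cup S_2)$.

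Your Van Kampen alternative is a complete proof on its own. Write $f_i$ for the gluing involution of $M_i$. Near $M_i$, $\sigma$ acts on a collar $M_i\times[-1,1]$ by $(x,t)\mapsto(f_i(x),-t)$. Hence $Y$ is $Z-\nu(S_1\cup S_2)$ with two twisted $I$-bundles over $M_i/f_i$ attached. Each contributes $\pi_1(M_i/f_i)/\langle\langle\pi_1(M_i)\rangle\rangle\cong\Z_2$, which gives $\Z_2*\Z_2$ directly, without even using $\pi_1(W)\cong\Z$.

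Your arguments are more self-contained than the paper's. They avoid checking that an infinite union is simply connected and that the action on it is free with quotient $Y$, which the paper leaves schematic. The Van Kampen route also identifies the two order-$2$ generators geometrically, as loops through the one-sided hypersurfaces $M_i/f_i$. The paper's explicit model of $\widetilde{Y}$, on the other hand, is reused later for the equivariant intersection form and the irreducibility argument.
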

\begin{proof}
    It is enough to show a free, orientation-preserving $D_\infty$-action on the universal cover of $W$, such that the quotient with respect to this action is the manifold $Y$.

Figure \ref{univcover} is a schematic picture of it: the universal cover of $W$ is given by infinitely many pairs of blocks $Z$, and we perform normal connected sum along the surfaces in each block as indicated by the red and blue lines. 
Notice that at each step we take two disjoint simply connected manifolds and we glue their connected boundaries, so the result will be simply connected. \\
We have a free $\Z$ action given by translation of two blocks (we denote this translation by $\tau$), and a free $\Z_2$ action that is an extension of the involution $\sigma$ considered in Step $(1)$ (of course, the fact that it is free depends on the choice of the gluing map of the surfaces).\\
We then see that the relation $\sigma \tau \sigma= \tau^{-1}$ holds, and so the group generated by these two actions is $D_\infty$.
\end{proof}

\begin{figure}[h]
    \def\svgwidth{\columnwidth}
    \centering
    
    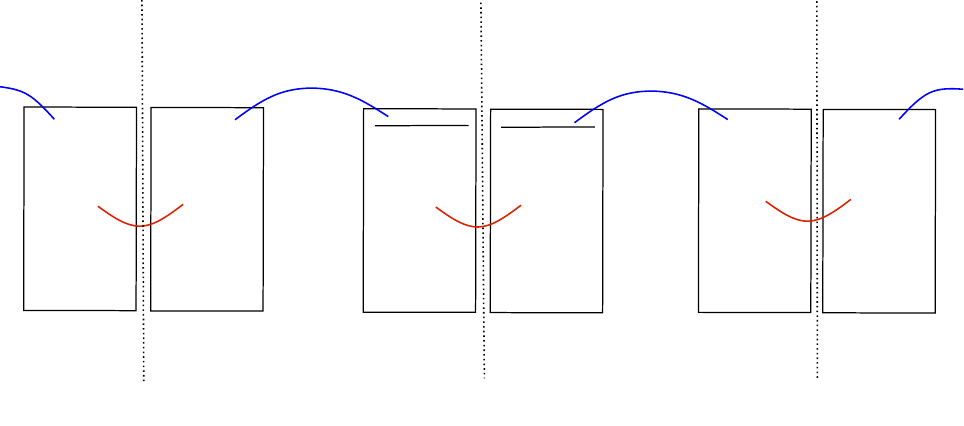

    \caption{The universal cover of $Y$. We remove neighborhoods of surfaces in each block, and identify the boundaries according to the red and blue lines. Each group element sends a copy of $Z-(\nu S_1\cup \nu S_2)$ to another one via the identity map, and permutes them according to the $D_\infty$ action.  So, order $2$ elements act as 'reflections' across the dotted lines on the schematic, while the $\Z$ subgroup acts via translations.}
    \label{univcover}
\end{figure}

\begin{remark}\label{W/sigma}
    In order to construct the manifold $W$ endowed with the involution $\sigma$, such that $\pi_1(W/\sigma)=D_\infty$, we used only the fact that $\pi_1(Z-(S_1 \cup S_2))=1$. Later on, we will run the same construction with the exotic copies $Z_k$ and $Z_J$ of $Z$, and both of them, by assumption, satisfy the same hypothesis on the complement of the surfaces. So, in the same way, we obtain manifolds $Y_k$ and $Y_J$ associated to them with fundamental group isomorphic to $D_\infty$.
\end{remark}

\textbf{Step (4)}. In this section, we will discuss how to produce the family of pairwise not diffeomorphic manifolds with the same equivariant intersection form as $Y$. As stated in Theorem \ref{Teo4}, we will distinguish two cases, based on the type of surgery used to produce exotic copies of $Z$.

\textbf{Torus surgery (using hypothesis (\ref{Hyp1}) of Theorem \ref{Teo4})}. 
Suppose that we can produce exotic copies $Z_k$ of $Z$ by torus surgery on a null-homologous torus $T \subset Z-(S_1\cup S_2)$, and that the manifold $Z_{T,\lambda}(0)$ has non-trivial Seiberg-Witten invariant for some curve $\lambda$. Then, we can perform the same torus surgeries (equivariantly) in each of the two blocks of the double cover $W$ of $Y$ (recall that $W$ has non-vanishing Seiberg-Witten invariant). Denote $W_k$ the double covers after the surgeries, and note that these still have infinite cyclic fundamental group, again by the hypotheses of Theorem \ref{Teo4} and by Lemma \ref{cyclic-pi1}.\\ Each $W_k$ has an involution, and the quotients are the manifolds $Y_k$ that have fundamental group isomorphic to $D_\infty$ by Lemma \ref{Lemma4.2} and Remark \ref{W/sigma}. \\

We now argue that this process changes the Seiberg-Witten invariants of $W_k$. Note that $W_k$ is obtained by $1/k$-surgery on the null-homologous torus $T$ in $Z\#_{S_1,S_2} Z_k$, and by two torus surgeries in $W=Z \#_{S_1,S_2} Z$.\\
 Let $s$ be the basic class associated to the symplectic structure on $W$.
Then, applying Equation \eqref{EqMMS}, 
$$SW_{W_k}(s_{1/k})=SW_{Z\#_{S_1,S_2} Z_k}(s)+
    k\sum_{i}SW_{Z_{T,\lambda}(0)\#_{S_1,S_2} Z_k}(s_{0}+2i[T_0]).$$

Note that in the formula we used the fact that $T$ is null-homologous, as well as $T_{1/k}$. We can simplify this formula even more: by the hypothesis of Theorem \ref{Teo4}, $T_0$ admits a dual torus with zero self-intersection, so we can use the proof of \cite[Corollary 2]{fintushel2007reverse} to deduce that
in the family $\{s_{0}+2i[T_0]\}$ there is at most one basic class. If we use Equation \eqref{EqMMS} once again, we get (we drop $T,\lambda$, and $S_1, S_2$ from the notation)

$$SW_{W_k}(s_{1/k})=SW_{Z\# Z}(s)+
    k SW_{Z(0)\# Z}(s_{0})+  k SW_{Z\# Z(0)}(s_{0}')+ k^2 SW_{Z(0)\# Z(0)}(s_{0,0}),$$
    and the leading term $SW_{Z(0)\#_{S_1,S_2} Z(0)}(s_{0,0})$ is not zero because $Z(0)\#_{S_1,S_2} Z(0)$ is symplectic (by hypothesis, $Z(0)$ is symplectic, and $s_{0,0}$ is the canonical class). This is enough to conclude that the family $\{W_k\}$ contains infinitely many non-diffeomorphic manifolds. \\



It follows that the manifolds $Y_k$ will be non-diffeomorphic, distinguished by their double covers $W_k$ (by Lemma \ref{Lemma4.2}, $\pi_1(W_k)=\Z$, so $W_k$ is the double cover associated to the unique  $\Z \triangleleft D_\infty$ of index $2$, see Section \ref{doublecover}). We will now check that the equivariant intersection form remains unchanged.\\

We have already described the universal cover of $Y$ in Figure \ref{univcover}.
Notice that the effect of non-Luttinger torus surgery on $Y$ (to obtain $Y_k$) on the universal cover is to change each $Z$ block with a $Z_{k}$ one.

It is clear that this operation does not change the equivariant intersection form of $Y$, as long as the null-homologous tori where we perform the surgeries do not intersect other representatives of a basis of $H_2(\Tilde{Y})$. We can always obtain this by a surgery procedure.
Denote by $T$ the null-homologous torus, and by $\Sigma$ any other surface. We have $\Sigma\cdot T=0$ algebraically, and we want to realize this geometrically. Given a pair of positive and negative intersection points $p_+$ and $p_-$, connect them with a path in $T$, disjoint from $\Sigma$ except at its end. We can cut small disks containing $p_+$ and $p_-$, and glue a cylinder to $\Sigma$ following the path. This process deletes the intersection points in pairs, and the result is an orientable manifold in the same homology class as $\Sigma$.\\ 

This concludes the proof of the first part of Theorem \ref{Teo4}.

\begin{remark}
    We proved that the manifolds $Y_k$ constructed via torus surgery have the same equivariant intersection form, however, we are not able to say that the Postnikov $k$-invariant is preserved. That would imply that the manifolds $Y_k$ are homeomorphic but pairwise not diffeomorphic. One wouldn't need to take the $k$-invariant under consideration if the equivariant intersection form is of the form $H(I\pi)\oplus \lambda$, as explained in \cite[Corollary 1.12]{hillman2025homotopy}. It is worth noting that $\pi_2$ of a manifold with $D_\infty$ fundamental group is always stably isomorphic to the direct sum of $I\pi$ and a free $\pi_1$-module, see \cite[Theorem 4.2]{hambleton2009intersection}  (notice that $H_4(D_\infty;\Z)=0$). 
\end{remark}

\textbf{Knot-surgery (using Hypothesis \ref{Hyp2} in Theorem \ref{Teo4})} Here we suppose that there is an embedded torus $T \subset Z-(S_1\cup S_2)$, such that $\pi_1(Z-(S_1\cup S_2\cup T))=1$. Notice that the hypothesis $\pi_1(Z-T)=1$ implies the existence of a surface $T^*$ dual to $T$. By dual surfaces, we mean that they intersect in one point and are disjoint from other elements of a basis of $H_2(Z)$. Recall that, by hypothesis, $T$ has trivial self-intersection.\\
We can thus perform knot-surgery on $T$ to change the Seiberg-Witten invariant of the manifold $Z$, and obtain a family of exotic manifolds $Z_J$, where $J$ denotes the knot used in the surgery. Notice that $Z_J$ is still simply connected. Using a construction similar to the torus surgery case, we obtain manifolds $W_J$ with distinct Seiberg-Witten invariants and infinite cyclic fundamental group, and endowed with an involution. Quotienting by this involution yields manifolds $Y_J$ with $D_\infty$ fundamental group.\\
\begin{remark}\label{symplecticality}
    If knot $J$ used in the surgery is fibered and the torus $T$ is symplectically embedded, then $W_J$ also admits a symplectic structure \cite{fintushel1998knots}.
\end{remark}

The effect of the surgery on the homology of $Z$ is to replace $T^*$ with another surface, obtained by puncturing $T^*$ and gluing to it a Seifert surface of the knot $J$ used in the surgery. Denote this new surface by $T^*_J$. Also, note that a translated copy of $T$ survives in $Z_J$, and we still denote it $T$. Then, $T$ and $T^*_J$ are dual in $Z_J$. Moreover, the effect on the universal cover of $Y$ is to replace $T^*$ with $T^*_J$ in each block.\\

To verify that this process does not change the equivariant intersection form, we will argue that, when gluing the blocks to each other via normal connected sum along $S_1$ or $S_2$, we only create surfaces in homology that still do not intersect $T$ and $T^*$, so that they are still dual in the universal cover of $Y$.\\
Denote $Z^k_{S_1,S_2}:=Z\#_S\dots \#_S Z$ the manifold obtained by gluing $k$ blocks along $S_1$ and $S_2$ alternatively (as prescribed by the arcs in Figure \ref{univcover}), where $S$ denotes either one of the two surfaces. Consider the Mayer-Vietoris sequence for the splitting $Z^{k+1}_{S_1,S_2}=Z^k_{S_1,S_2}\cup_S Z$:
\begin{equation}\label{eqfg}
0\rightarrow H_2(S\times \mu) \rightarrow H_2(Z^k_{S_1,S_2}-\nu(S))\oplus H_2(Z-\nu(S))\xrightarrow{f}
\end{equation}
\begin{equation*}
\rightarrow H_2(Z^k_{S_1,S_2}\#_{S}Z)\xrightarrow{g} H_1(S\times\mu)\rightarrow 0,
\end{equation*}

where $\mu$ is the meridian of the surface $S$ across which we glue the two blocks. Here we used the fact that $H_3(Z^k_{S_1,S_2}\#_{S}Z)=0$ (which follows from $H_1(Z^k_{S_1,S_2}\#_{S}Z)=0$) and $H_1(Z^k_{S_1,S_2}-\nu S)=0$. Remember also that, by hypothesis, $[S]^2=0$, so that $\nu(S)$ is identified with $S\times D^2$.\\
Again by a Mayer-Vietoris argument, we can see that 
$$H_2(Z^k_{S_1,S_2}-\nu(S))=(H_2(Z^k_{S_1,S_2})/\langle S^*\rangle)\bigoplus_i (\gamma_i \times \mu),$$

where $\gamma_i$ are generators of $H_1(S)$.

From Equation \eqref{eqfg}, we see that, when gluing the $(k+1)$-st $Z$ block to $Z^k_{S_1,S_2}$, we introduce new elements in homology, specifically:
\begin{itemize}
    \item Surfaces of the form $\gamma_i\times \mu$, where $\gamma_i$ are generators of $H_1(S)$ (coming from the map $f$ in \eqref{eqfg}),
    \item surfaces of the form $F_{\gamma_i} \cup_{\gamma_i} F'_{\gamma_i}$, where $\gamma_i$ are generators of $H_1(S)$ and $\partial F_{\gamma_i}=\gamma_i$ (these surfaces are sent to generators of $H_1(S\times\mu)$ by the map $g$ in \eqref{eqfg}). We can choose $F_{\gamma_i}$ to be disjoint from the knot-surgery torus and its dual, by the hypothesis in the statement of Theorem \ref{Teo4}.
\end{itemize}
From this we deduce that $T$ and $T^*_J$ remain dual in the universal cover of $Y$, and the same holds for their translated copies under the $D_\infty$-action. It follows that also $T$ and $T^*_J$ are dual in the universal cover of $Y_J$, and so we can find an isomorphism between the equivariant intersection forms of $Y$ and $Y_J$, sending the pair $(T,T^*)$ to $(T,T^*_J)$ (and the same for their images via the $D_\infty$ action) and leaving everything else unchanged.\\

By Theorem \ref{ThankyouMark}, the manifolds $Y_J$ will be homeomorphic to $Y$ if the Postnikov $k$-invariant is preserved after the surgeries.\\
Given any knot $J$, there exists a degree one map from $(S^3-J)$ to $(S^3-U)$ (where $U$ denotes the unknot) that collapses the Seifert surface of $J$ to a disk \cite{boileau2016one}. This map extends to a map $F$ from $Y_J$ to $Y$. If this map induces a homotopy equivalence between the associated Postnikov 2-approximations, then the $k$ invariant is preserved. To this end, we just need to check that $F$ induces an isomorphism between the first two homotopy groups of $Y_J$ and $Y$. This follows because:
\begin{itemize}
    \item we can find generators of the fundamental groups of $Y$ that are disjoint from the surgered torus, and
    \item by identifying $\pi_2(Y)$ with the second homology group of its universal cover, we see that $F$ maps $T^*_J$ to $T^*$ (and similarly for their images under the $D_\infty$ action) and leaves everything else unchanged, so $F$ induces an isomorphism between $\pi_2(Y_J)$ and $\pi_2(Y)$ (and of the respective equivariant intersection forms).  
\end{itemize}
 This concludes the proof of Theorem \ref{Teo4}.
\end{proof}

\section{Proof of Theorem 1.1}\label{Section5}
We will now prove Theorem \ref{Teo1}. We rephrase it here as follow, including also Theorem \ref{exoticS1}.

\begin{theorem}\label{TeoexoticWm}
There exist infinite families of irreducible, homeomorphic but pairwise not diffeomorphic non-spin manifolds $\{Y_{m,J}\}$, with $m\geq6$, and $\{Y'_{m,J}\}$, with $m\geq5$, with fundamental group isomorphic to $D_\infty$. Moreover, $b_2^+(Y_{m,J})=b_2^-(Y_{m,J})=2m+1$, while  $b_2^+(Y'_{m,J})=b_2^-(Y'_{m,J})=2m+2$.\\
 The double covers associated to $\Z\triangleleft D_\infty$ are, respectively, irreducible manifolds $\{W_{m,J}\}$ homeomorphic but pairwise not-diffeomorphic to $(S^1\times S^3) \# (4m+4) (\CP^2 \# \overline{\CP}^2 )$, and $\{W'_{m,J}\}$ homeomorphic but pairwise not-diffeomorphic to $(S^1\times S^3) \# (4m+6) (\CP^2 \# \overline{\CP}^2 )$.
\end{theorem}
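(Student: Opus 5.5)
We apply Theorem \ref{Teo4} in case (2) with $Z=Z_m$. For $\{Y_{m,J}\}$ ($m\geq 6$) we take $S_1=T_1$ and $S_2=T_2$, the Lagrangian tori of Corollary \ref{cor1}, and do knot surgery on $T=T_3=x_1\times a_6$. For $\{Y'_{m,J}\}$ ($m\geq5$) we take $S_1=\Sigma$ (genus $2$), $S_2=T_1$, and $T=T_2$.

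\textbf{Symplectic hypotheses.} The tori $T_i$ are Lagrangian. We perturb $\omega$ (Gompf's trick, using that $[T_i]\neq 0$ since each has a dual torus) so that they become symplectic and still have self-intersection $0$. $\Sigma$ is already symplectic with square $0$.

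\textbf{Simple connectivity of complements.} The condition $\pi_1(Z_J-(S_1\cup S_2))=1$ is exactly Corollary \ref{cor2} in each case; $Z$ itself is the case where $J$ is the unknot.

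\textbf{Surfaces $F_\gamma$.} Take a simple curve $\gamma$ on $T_1$, $T_2$ or $\Sigma$. It is a word in the generators $x_j,y_j,a_i,b_i$. By Lemma \ref{Pi1} it is null-homotopic in $\overline{Z}_m=K\cup B\cup(C-\Sigma)$. This region avoids the tori in $A-K$ and also avoids $\Sigma$ (we push $\gamma$ off slightly). So $\gamma$ bounds an immersed disk there, and resolving double points gives an embedded $F_\gamma$ disjoint from $T$, from $T^*$ (which we place in $A-K$), and from $S_1,S_2$ away from $\partial F_\gamma$. This is the step I expect to be the main obstacle. One must check that the dual tori of $T$ and the connecting arcs from $\gamma$ to $\overline{Z}_m$ can be chosen inside $A-K$ away from $S_1,S_2$. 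One must also check that the homotopy obtained through the relations in Lemma \ref{Pi1} stays in $\overline{Z}_m$; the vanishing cycles lie in $K$ and the Luttinger relations lie in $B\cup C$, so it should.

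Theorem \ref{Teo4} then gives homeomorphic, pairwise non-diffeomorphic $Y_J$ with $\pi_1\cong D_\infty$. Non-diffeomorphism comes from the knot surgery formula \eqref{knotsurgeryformula} applied twice on $W_J$, giving factor $\Delta_J(e^{2T})^2$, with $J$ ranging over knots of distinct Alexander polynomials.

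\textbf{Characteristic numbers.} $Z_m$ has $\chi=4m+6$ and $\sigma=0$. Gluing two copies along two tori gives $\chi(W)=2\chi(Z)=8m+12$. Gluing along $\Sigma$ and a torus gives $\chi(W')=8m+12+4=8m+16$. In both cases $\sigma=0$ and $b_1=1$, so $b_2(W)=8m+12$, $b_2(W')=8m+16$, and $b_2^\pm$ equal $4m+6$ and $4m+8$ respectively. A free involution halves $\chi$ and $\sigma$, and $b_1(Y)=0$. Hence $b_2(Y)=\chi(W)/2-2=4m+4$, so $b_2^\pm(Y)=2m+2$, and similarly $b_2^\pm(Y')=2m+3$. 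If these differ by one from the stated values, I would adjust the indexing of $m$ (or the gluing surfaces) accordingly; the check is routine.

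\textbf{Homeomorphism type of $W$.} $W$ is odd, because of the odd-square surfaces from the reducible fiber (\ref{oddity}), lifted into a block. It has $\pi_1=\Z$ and $\sigma=0$. By the Freedman-type classification for $\pi_1=\Z$ (forms extended from $\Z$, indefinite odd unimodular), $W\cong_{top}(S^1\times S^3)\#n(\CP^2\#\overline{\CP}^2)$. Since the universal cover contains odd classes, the $Y$'s are non-spin with non-spin universal cover, so Theorem \ref{ThankyouMark} applies.

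\textbf{Irreducibility.} $W_J$ is symplectic for fibered $J$ (Remark \ref{symplecticality}) and minimal, so it is irreducible, as in Hamilton–Kotschick for residually finite $\pi_1$. A connected sum decomposition of $Y_J$ would lift to $W_J$, so $Y_J$ is irreducible as well.
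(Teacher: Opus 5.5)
Your construction is the paper's: Theorem \ref{Teo4}(2) applied to $Z_m$ with the same $S_1,S_2$ and knot-surgery torus in both cases, Corollary \ref{cor2} for the complements, and Lemma \ref{Pi1} for the surfaces $F_\gamma$. The paper handles the $F_\gamma$ step at the same level of detail as you. However, two steps fail as written.

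\textbf{The Euler characteristic.} $Z_m$ is homeomorphic to $\#_{2m+1}(\CP^2\#\overline{\CP}^2)$, so $b_2(Z_m)=4m+2$ and $\chi(Z_m)=4m+4$, not $4m+6$. With the correct value, $\chi(W)=8m+8$ and $\chi(W')=2\chi(Z_m)+4=8m+12$; your $+4$ from removing two genus-$2$ surfaces is right. Using $\chi(W)=b_2(W)$, $\chi(Y)=\chi(W)/2=b_2(Y)+2$ and vanishing signatures, you then get exactly the stated $4m+4$, $4m+6$, $2m+1$, $2m+2$. Re-indexing $m$ is not an available fix. The index $m$ is tied to $Z_m$ and to the ranges $m\ge 6$, $m\ge 5$ coming from Corollary \ref{cor1}. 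As written, your numbers contradict the statement rather than prove it.

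\textbf{The homeomorphism type of $W_{m,J}$.} Closed $4$-manifolds with $\pi_1=\Z$ are classified by the $\Z[\Z]$-valued equivariant intersection form (plus Kirby--Siebenmann). To get $W\cong(S^1\times S^3)\#N$ with $N$ simply connected, that form must be extended from $\Z$, and this is not automatic. You assume it in a parenthesis. The paper obtains it from Hambleton--Teichner, which applies because $b_2-|\sigma|\ge 6$; only then do oddness and vanishing signature determine $N$.

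\textbf{Smaller points.} Minimality of $W_J$ needs justification; the paper uses Usher's theorem on symplectic sums, and note that the second gluing is a self-sum. Granting minimality, your route to irreducibility is a reasonable alternative to the paper's universal-cover argument: apply Hamilton--Kotschick directly to $W_J$ (since $\Z$ is residually finite) and descend to $Y_J$. But the descent must treat a splitting of $Y_J$ realizing $D_\infty=\Z_2*\Z_2$. In that case the two lifted $3$-spheres are non-separating and split off an $S^1\times S^3$ summand of $W_J$. This still contradicts irreducibility, because the complementary summand is simply connected with $b_2=b_2(W_J)>0$. Finally, the Seiberg--Witten factor on $W_J$ is $\Delta_J(e^{2T})\Delta_J(e^{2T'})$, with $T'$ the copy of $T$ in the other block, not a square. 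The two tori are homologically distinct, since $T^*$ meets $T$ once and misses $T'$. This does not affect the conclusion.
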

\begin{proof}
    We use Theorem \ref{Teo4} (2) with input the Baykur-Hamada manifolds $Z_m$ described in Theorem \ref{sign0manifolds}.\\
    We can run the construction in two ways:
    \begin{enumerate}
        \item when $m\geq 6$, we pick disjointly embedded Lagrangian tori $T_1$, $T_2$, $T_3$ as in Corollary \ref{cor1}. We set $S_1=T_1$, $S_2=T_2$, and use $T_3$ to perform knot surgery.
        \item When $m\geq 5$ we use the genus $2$ surface  $\Sigma$ defined in Section \ref{subsectManifldZm} as $S_1$, we set $S_2=T_1$, and we use $T_2$ to perform knot surgery, where $T_1$ and $T_2$ are as in Corollary \ref{cor1}.
    \end{enumerate}
    All the surfaces described above have zero self-intersection and admit a dual surface. For example, all the tori have zero self-intersection because they are Lagrangian, and their dual is described in Section \ref{subsectManifldZm} (\ref{newtori}), while the dual of $\Sigma$ is given by the fiber of the fibration $\textgoth{Z}_m$. Moreover, we can perturb the symplectic structure so that the tori become symplectic.\\

    We verify that the hypotheses of Theorem \ref{Teo4} (2) are satisfied: by Corollary \ref{cor2}, the complement of the surfaces $S_1$ and $S_2$ in $Z_{m,J}$ is simply connected.  Moreover, by Lemma \ref{Pi1}, we have that $K\cup B\cup(C-\Sigma)$ is simply connected, and since $T_1$, $T_2$ and $T_2^*$ are contained in its complement, we deduce that every curve in $\Sigma$ and $T_1$ bounds a surfaces disjoint from $T_2$ and $T^*_2$ and intersects $\Sigma \cup T_1$ only along the boundary (and similarly, curves in $T_1$ and $T_2$ bound surfaces disjoint from $T_3$ and $T^*_3$). \\
Thus, we can apply Theorem \ref{Teo4}. In the first case we denote the exotic families as $Y_{m,J}$ and their double covers $W_{m,J}$, while in the second, we denote them $Y'_{m,J}$ and $W'_{m,J}$. Notice that the exoticness of $Y_{m,J}$ and $Y'_{m,J}$ is detected by the Seiberg-Witten invariants of $W_{m,J}$ and $W'_{m,J}$.\\



We can then easily compute the topological invariants of these manifolds. For $Y_m$: since removing tori and gluing along three-manifolds does not change the Euler characteristic, we first see that $\chi(W_m)=2\chi(Z_m)=2(4m+4)$ (recall that $Z_m$ is homeomorphic to $(2m+1)(\CP^2\# \overline{\CP}^2)$, as explained in Section \ref{subsectManifldZm}). Since $b_1(W_m)=1$ and $\sigma(W_m)=0$ (in fact, $W_m$ is cobordant to the disjoint union of two copies of $Z_m$), we obtain $b_2^+(W_m)=b_2^-(W_m)=4m+4.$\\
By multiplicativity of the Euler characteristic and of the signature, $\chi(Y_m)=\frac{1}{2}\chi(W_m)=\chi(Z_m)$  and $\sigma(Y_m)=\frac{1}{2}\sigma(W_m)=0$. Since a manifold with $D_\infty$ fundamental group has $b_1=0$, we find 
$$b_2^+(Y_m)=b_2^-(Y_m)=2m+1.$$

For $W_m'$ and $Y_m'$, the only difference is that, when removing a section, we increase the Euler characteristic by $2$. Thus, similar computations show that $b_2^+(W'_m)=b_2^-(W_m')=4m+6$ and
$$b_2^+(Y_m')=b_2^-(Y_m')=2m+2.$$

Since the condition $b_2-|\sigma|\geq 6$ holds in $W_{m,J}$ and $W_{m,J}'$, their equivariant intersection form is extended from the integers by (\cite{hambletonteich}, Theorem 2). By the homeomorphism classification of manifolds with infinite cyclic fundamental group \cite{freedman2014topology}, we deduce that they are homeomorphic to a connected sum of $S^1\times S^3$ and a simply-connected manifold with odd intersection form (in fact, by Section \ref{subsectTheFibrZm}(\ref{oddity}), we can find a surface with odd self-intersection in $W_{m,J}$), and this is enough to determine the homeomorphism type of the manifolds.\\

Lastly, we argue that these examples are irreducible. In fact, $Z_m$ is an irreducible symplectic manifold, as well as $Z_{m,J}$, if the knots used for knot surgeries are fibered, as seen in Section \ref{subhowgetexotica}. Thus, when gluing two copies of $Z_{m,J}$ by normal connected sum, we get a minimal symplectic manifold by  \cite{usher2006minimality}. Since it is also simply connected, it is irreducible by \cite{hamilton2006minimality}. Inductively, we see that gluing a finite number of copies of $Z_m$ by normal connected sum yields an irreducible manifold, as long as the result is simply connected.\\
So, an embedded $S^3$ in $Y_{m,J}$ or $W_{m,J}$ lifts to the universal cover $\Tilde{Y}_{m,J}$ described in Figure 
\ref{univcover}. By compactness, the lift is contained in a finite number of (glued) copies of $Z_m$, which we argued is irreducible. This concludes the proof.
\end{proof}
\begin{proof}[Proof of Theorem\ref{Teo1}]
Theorem \ref{TeoexoticWm} describes the signature zero examples of Theorem \ref{Teo1}. We get all the other examples by blowing up and changing the orientation. Their diffeomorphism type are distinguished by blowing-up and changing orientation on the manifolds $W_{m,J}$.
\end{proof}

\begin{section}{Manifolds with isometric equivariant intersection form}
    Here we briefly see how to apply Theorem \ref{Teo4} Case (1) to produce an infinite family of non-diffeomorphic $4$-manifolds, sharing the same equivariant intersection form. As previously explained, in this case, we are not able to say that the manifolds are homeomorphic; nevertheless, we have the following:
    \begin{theorem}
        There exist infinite families of pairwise not diffeomorphic non-spin manifolds $\{Y_{5,k}\}$ with $\pi_1(Y_{5,k})\cong D_\infty$, with isometric equivariant intersection form and $b_2^+(Y_{5,k})=b_2^-(Y_{5,k})=11$.
    \end{theorem}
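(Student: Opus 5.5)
We will apply Theorem \ref{Teo4}, Case (1), with input $Z=Z_5$, the Baykur--Hamada manifold, homeomorphic to $\#_{11}(\CP^2\#\overline{\CP}^2)$. We take $S_1=\Sigma$, the genus $2$ surface descending from a section, and $S_2=T_1=x_1\times a_5$, as in Corollary \ref{cor1}. Both have zero self-intersection. $\Sigma$ is symplectic, and $T_1$ is Lagrangian, so we perturb the symplectic form to make it symplectic, as in the proof of Theorem \ref{TeoexoticWm}. For the torus in hypothesis (\ref{Hyp1}) we use the null-homologous core torus $T$ of the surgery on $x_2\times a_{m+1}$ from Definition \ref{def1}. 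It lies in $B\cup C$, away from $\Sigma$ and from $T_1\subset A-K$, with $\lambda$ the corresponding surgery curve. The family $\{Z_k\}$ is then $\{Z_{5,k}\}$.

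The hypotheses of Case (1) are checked as follows.
\begin{enumerate}
\item $\pi_1(Z_{5,k}-(\Sigma\cup T_1))=1$ is exactly Corollary \ref{cor2} with $m=5$.
\item $Z_{T,\lambda}(0)$ is the manifold obtained from the Lefschetz fibration $\textgoth{Z}_5$ by performing all the Luttinger surgeries except the one on $x_2\times a_{6}$. Being the result of Luttinger surgeries on a symplectic manifold, it is symplectic.
\item The core torus $T_0$ is then the Lagrangian torus $x_2\times a_6$ itself. It has a dual Lagrangian torus $y_2\times b_6$, which has zero self-intersection; this is the standard dual pair in $C_0\cong\Sigma_2\times\mathring T_2$, as in \cite{fintushel2007reverse}.
\end{enumerate}
This is the step needing most care. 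I expect the main obstacle to be confirming that the dual torus survives the remaining surgeries in the link, i.e.\ that it is disjoint from the other surgery tori. I would settle it by inspecting the explicit link of tori in \cite{bayku}, or by choosing the dual torus as a pushed-off copy.

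Theorem \ref{Teo4}(1) then gives manifolds $Y_{5,k}$ with $\pi_1\cong D_\infty$ and isometric equivariant intersection forms. They are pairwise non-diffeomorphic, because the double covers $W_{5,k}$ have Seiberg--Witten invariants whose leading coefficient $k^2\,SW_{Z(0)\#Z(0)}(s_{0,0})\neq 0$ grows with $k$.

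For the Betti numbers we repeat the computation in the proof of Theorem \ref{TeoexoticWm}, for the second construction. Removing the genus $2$ surface $\Sigma$ raises the Euler characteristic by $2$, and removing a torus leaves it unchanged, so $\chi(W)=2(\chi(Z_5)+2)=2(24+2)=52$. Since $\sigma(W)=0$ and $b_1(W)=1$, we get $b_2^+(W)=b_2^-(W)=26$. Passing to the quotient, $\chi(Y)=26$, $\sigma(Y)=0$ and $b_1(Y)=0$, so $b_2^+(Y_{5,k})=b_2^-(Y_{5,k})=12$.

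This gives $12$ rather than the stated $11$. I would recheck which surfaces are used, since the choice $S_1=T_1$, $S_2=T_2$ (both tori) gives $\chi(W)=48$ and hence $11$. So I would in fact take two tori, $S_1=T_1$ and $S_2=T_2$, whose complement in $Z_{5,k}$ is simply connected by Corollary \ref{cor2} for $m=5$. This yields $b_2^\pm=2\cdot 5+1=11$, matching the statement.

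Non-spinness follows because the reducible fiber of $K$ gives, by Section \ref{subsectTheFibrZm}(\ref{oddity}), a surface of odd self-intersection in each $Z$ block of the universal cover. That surface lies in $A$, which is disjoint from $T$, so it persists in every $Z_{5,k}$; hence the universal cover is not spin, and neither is $Y_{5,k}$.
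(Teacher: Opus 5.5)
Your final version (Theorem~\ref{Teo4}(1) applied to $Z_5$ with $S_1=T_1$, $S_2=T_2$, the null-homologous torus of Definition~\ref{def1}, Corollary~\ref{cor2} for the simply connected complement, and the Euler characteristic/signature count giving $b_2^\pm=2\cdot 5+1=11$) is exactly the paper's proof. Your self-correction is also right: the initial choice $S_1=\Sigma$ is the $Y'_m$-type construction and yields $b_2^\pm=12$. Your remarks on the dual torus of $T_0$ and on non-spinness (the $-1$ components of the reducible fiber in $K$, disjoint from all surgered tori) flesh out points the paper only asserts, but do not change the approach.
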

\end{section}

\begin{proof}
    We apply Theorem \ref{Teo4} case (1) with input the Baykur-Hamada manifold $Z_5$, and with $S_1=T_1$ and $S_2=T_2$ as in Corollary \ref{cor1}, see also Figure \ref{fibration}. The null-homologous torus $T$ is defined in Definition \ref{def1}, and the manifold $(Z_5)_T(0)$ is symplectic, since $Z_5$ is obtained from it by Luttinger surgery. Also, $T_0\subset(Z_5)_T(0)$ admits a dual torus with zero self-intersection.
    Then, by Corollary \ref{cor2}, the hypotheses of Theorem \ref{Teo4} are satisfied, and thus we obtain a family $Y_{5,k}$ of non-diffeomorphic manifolds with the same equivariant intersection form. The same topological computations as in Section \ref{Section5} show that \[b_2^+(Y_{5,k})=b_2^-(Y_{5,k})=11.\qedhere\]
    \end{proof}
    The advantage of this construction is that we only need two tori with simply-connected complement. So, we do not need to enlarge the genus of the fiber of $Z_m$ to obtain the third one; compare with Section \ref{Section5}. Thus, we obtain smaller examples than in Theorem \ref{Teo1}.

   Of course, you can blow-up and change orientation to get families with $b_2^+ \neq b_2^-$.


\nocite{*}
\bibliographystyle{alpha}
\bibliography{ref}
\bigskip

\textit{E-mail address}: \texttt{simo@renyi.hu}

\bigskip

\textsc{$^{1}$E\"{o}tv\"{o}s Lor\'{a}nd University, Budapest, Hungary}\par
\textsc{$^{2}$HUN-REN Alfréd R\'{e}nyi Institute of Mathematics, Budapest, Hungary}

\end{document}

%% file: images/Fibration.pdf_tex
\begingroup%
  \makeatletter%
  \providecommand\color[2][]{%
    \errmessage{(Inkscape) Color is used for the text in Inkscape, but the package 'color.sty' is not loaded}%
    \renewcommand\color[2][]{}%
  }%
  \providecommand\transparent[1]{%
    \errmessage{(Inkscape) Transparency is used (non-zero) for the text in Inkscape, but the package 'transparent.sty' is not loaded}%
    \renewcommand\transparent[1]{}%
  }%
  \providecommand\rotatebox[2]{#2}%
  \newcommand*\fsize{\dimexpr\f@size pt\relax}%
  \newcommand*\lineheight[1]{\fontsize{\fsize}{#1\fsize}\selectfont}%
  \ifx\svgwidth\undefined%
    \setlength{\unitlength}{264.24987998bp}%
    \ifx\svgscale\undefined%
      \relax%
    \else%
      \setlength{\unitlength}{\unitlength * \real{\svgscale}}%
    \fi%
  \else%
    \setlength{\unitlength}{\svgwidth}%
  \fi%
  \global\let\svgwidth\undefined%
  \global\let\svgscale\undefined%
  \makeatother%
  \begin{picture}(1,0.58800471)%
    \lineheight{1}%
    \setlength\tabcolsep{0pt}%
    \put(0,0){\includegraphics[width=\unitlength,page=1]{Fibration.pdf}}%
    \put(0.57135574,0.52133838){\makebox(0,0)[lt]{\lineheight{1.25}\smash{\begin{tabular}[t]{l}$C_0$\end{tabular}}}}%
    \put(0.85890138,0.50046812){\makebox(0,0)[lt]{\lineheight{1.25}\smash{\begin{tabular}[t]{l}$\Sigma$\end{tabular}}}}%
    \put(0.56592119,0.26703023){\makebox(0,0)[lt]{\lineheight{1.25}\smash{\begin{tabular}[t]{l}$B_0$\end{tabular}}}}%
    \put(0.32190215,0.21383111){\makebox(0,0)[lt]{\lineheight{1.25}\smash{\begin{tabular}[t]{l}$K$\end{tabular}}}}%
    \put(-0.00251302,0.3033412){\makebox(0,0)[lt]{\lineheight{1.25}\smash{\begin{tabular}[t]{l}$T_1$\end{tabular}}}}%
    \put(0.02243995,0.23730598){\makebox(0,0)[lt]{\lineheight{1.25}\smash{\begin{tabular}[t]{l}$T_2$\end{tabular}}}}%
    \put(0.38760791,0.40758655){\color[rgb]{0.80784314,0.39215686,0}\makebox(0,0)[lt]{\lineheight{1.25}\smash{\begin{tabular}[t]{l}$A$\end{tabular}}}}%
    \put(0,0){\includegraphics[width=\unitlength,page=2]{Fibration.pdf}}%
  \end{picture}%
\endgroup%

%% file: images/torussym.pdf_tex
\begingroup%
  \makeatletter%
  \providecommand\color[2][]{%
    \errmessage{(Inkscape) Color is used for the text in Inkscape, but the package 'color.sty' is not loaded}%
    \renewcommand\color[2][]{}%
  }%
  \providecommand\transparent[1]{%
    \errmessage{(Inkscape) Transparency is used (non-zero) for the text in Inkscape, but the package 'transparent.sty' is not loaded}%
    \renewcommand\transparent[1]{}%
  }%
  \providecommand\rotatebox[2]{#2}%
  \newcommand*\fsize{\dimexpr\f@size pt\relax}%
  \newcommand*\lineheight[1]{\fontsize{\fsize}{#1\fsize}\selectfont}%
  \ifx\svgwidth\undefined%
    \setlength{\unitlength}{471.35363379bp}%
    \ifx\svgscale\undefined%
      \relax%
    \else%
      \setlength{\unitlength}{\unitlength * \real{\svgscale}}%
    \fi%
  \else%
    \setlength{\unitlength}{\svgwidth}%
  \fi%
  \global\let\svgwidth\undefined%
  \global\let\svgscale\undefined%
  \makeatother%
  \begin{picture}(1,0.79372327)%
    \lineheight{1}%
    \setlength\tabcolsep{0pt}%
    \put(0,0){\includegraphics[width=\unitlength,page=1]{torussym.pdf}}%
  \end{picture}%
\endgroup%

%% file: images/schematic.pdf_tex
\begingroup%
  \makeatletter%
  \providecommand\color[2][]{%
    \errmessage{(Inkscape) Color is used for the text in Inkscape, but the package 'color.sty' is not loaded}%
    \renewcommand\color[2][]{}%
  }%
  \providecommand\transparent[1]{%
    \errmessage{(Inkscape) Transparency is used (non-zero) for the text in Inkscape, but the package 'transparent.sty' is not loaded}%
    \renewcommand\transparent[1]{}%
  }%
  \providecommand\rotatebox[2]{#2}%
  \newcommand*\fsize{\dimexpr\f@size pt\relax}%
  \newcommand*\lineheight[1]{\fontsize{\fsize}{#1\fsize}\selectfont}%
  \ifx\svgwidth\undefined%
    \setlength{\unitlength}{350.6892425bp}%
    \ifx\svgscale\undefined%
      \relax%
    \else%
      \setlength{\unitlength}{\unitlength * \real{\svgscale}}%
    \fi%
  \else%
    \setlength{\unitlength}{\svgwidth}%
  \fi%
  \global\let\svgwidth\undefined%
  \global\let\svgscale\undefined%
  \makeatother%
  \begin{picture}(1,0.6272276)%
    \lineheight{1}%
    \setlength\tabcolsep{0pt}%
    \put(0,0){\includegraphics[width=\unitlength,page=1]{schematic.pdf}}%
    \put(0.04932912,0.51085899){\makebox(0,0)[lt]{\lineheight{1.25}\smash{\begin{tabular}[t]{l}$Z$\end{tabular}}}}%
    \put(0.29529629,0.4887491){\makebox(0,0)[lt]{\lineheight{1.25}\smash{\begin{tabular}[t]{l}$Z$\end{tabular}}}}%
    \put(0.45795807,0.49153404){\makebox(0,0)[lt]{\lineheight{1.25}\smash{\begin{tabular}[t]{l}$Z$\end{tabular}}}}%
    \put(0.73506156,0.55366946){\color[rgb]{0.91372549,0,0}\makebox(0,0)[lt]{\lineheight{1.25}\smash{\begin{tabular}[t]{l}$\nu S_1$\end{tabular}}}}%
    \put(0.43734563,0.57684326){\color[rgb]{0.91372549,0,0}\makebox(0,0)[lt]{\lineheight{1.25}\smash{\begin{tabular}[t]{l}$\nu S_1$\end{tabular}}}}%
    \put(0.88851243,0.55355923){\color[rgb]{0.93333333,0,0}\makebox(0,0)[lt]{\lineheight{1.25}\smash{\begin{tabular}[t]{l}$\nu S_1$\end{tabular}}}}%
    \put(0,0){\includegraphics[width=\unitlength,page=2]{schematic.pdf}}%
    \put(0.74283461,0.50179929){\color[rgb]{0,0,0.99215686}\makebox(0,0)[lt]{\lineheight{1.25}\smash{\begin{tabular}[t]{l}$\nu S_2$\end{tabular}}}}%
    \put(0.27238811,0.12587679){\color[rgb]{0,0,0.99215686}\makebox(0,0)[lt]{\lineheight{1.25}\smash{\begin{tabular}[t]{l}$\nu S_2$\end{tabular}}}}%
    \put(0.08271556,0.12577462){\color[rgb]{0,0,0.99215686}\makebox(0,0)[lt]{\lineheight{1.25}\smash{\begin{tabular}[t]{l}$\nu S_2$\end{tabular}}}}%
    \put(0.46448662,0.12654024){\color[rgb]{0,0,0.99215686}\makebox(0,0)[lt]{\lineheight{1.25}\smash{\begin{tabular}[t]{l}$\nu S_2$\end{tabular}}}}%
    \put(0.88861181,0.50284982){\color[rgb]{0,0,1}\makebox(0,0)[lt]{\lineheight{1.25}\smash{\begin{tabular}[t]{l}$\nu S_2$\end{tabular}}}}%
    \put(0.08407262,0.19082618){\color[rgb]{0.91372549,0,0}\makebox(0,0)[lt]{\lineheight{1.25}\smash{\begin{tabular}[t]{l}$\nu S_1$\end{tabular}}}}%
    \put(0.46186726,0.19329246){\color[rgb]{0.91372549,0,0}\makebox(0,0)[lt]{\lineheight{1.25}\smash{\begin{tabular}[t]{l}$\nu S_1$\end{tabular}}}}%
    \put(0.27606021,0.18819203){\color[rgb]{0.91372549,0,0}\makebox(0,0)[lt]{\lineheight{1.25}\smash{\begin{tabular}[t]{l}$\nu S_1$\end{tabular}}}}%
    \put(0.2618944,0.57949989){\color[rgb]{0.91372549,0,0}\makebox(0,0)[lt]{\lineheight{1.25}\smash{\begin{tabular}[t]{l}$\nu S_1$\end{tabular}}}}%
    \put(0.74951578,0.37278764){\makebox(0,0)[lt]{\lineheight{1.25}\smash{\begin{tabular}[t]{l}$W$\end{tabular}}}}%
    \put(0.83538019,0.19346194){\makebox(0,0)[lt]{\lineheight{1.25}\smash{\begin{tabular}[t]{l}$Y$\end{tabular}}}}%
    \put(0,0){\includegraphics[width=\unitlength,page=3]{schematic.pdf}}%
    \put(0.64725396,0.19831696){\makebox(0,0)[lt]{\lineheight{1.25}\smash{\begin{tabular}[t]{l}...\end{tabular}}}}%
    \put(0,0){\includegraphics[width=\unitlength,page=4]{schematic.pdf}}%
    \put(-0.0024394,0.11884526){\makebox(0,0)[lt]{\lineheight{1.25}\smash{\begin{tabular}[t]{l}...\end{tabular}}}}%
    \put(0.3788878,0.39409016){\makebox(0,0)[lt]{\lineheight{1.25}\smash{\begin{tabular}[t]{l}$X$\end{tabular}}}}%
    \put(0.32393453,0.00372037){\makebox(0,0)[lt]{\lineheight{1.25}\smash{\begin{tabular}[t]{l}$\Tilde{Y}$\end{tabular}}}}%
  \end{picture}%
\endgroup%

%% file: images/univcoverpicture2.pdf_tex
\begingroup%
  \makeatletter%
  \providecommand\color[2][]{%
    \errmessage{(Inkscape) Color is used for the text in Inkscape, but the package 'color.sty' is not loaded}%
    \renewcommand\color[2][]{}%
  }%
  \providecommand\transparent[1]{%
    \errmessage{(Inkscape) Transparency is used (non-zero) for the text in Inkscape, but the package 'transparent.sty' is not loaded}%
    \renewcommand\transparent[1]{}%
  }%
  \providecommand\rotatebox[2]{#2}%
  \newcommand*\fsize{\dimexpr\f@size pt\relax}%
  \newcommand*\lineheight[1]{\fontsize{\fsize}{#1\fsize}\selectfont}%
  \ifx\svgwidth\undefined%
    \setlength{\unitlength}{462.32052184bp}%
    \ifx\svgscale\undefined%
      \relax%
    \else%
      \setlength{\unitlength}{\unitlength * \real{\svgscale}}%
    \fi%
  \else%
    \setlength{\unitlength}{\svgwidth}%
  \fi%
  \global\let\svgwidth\undefined%
  \global\let\svgscale\undefined%
  \makeatother%
  \begin{picture}(1,0.44138523)%
    \lineheight{1}%
    \setlength\tabcolsep{0pt}%
    \put(0,0){\includegraphics[width=\unitlength,page=1]{univcoverpicture2.pdf}}%
    \put(0.41043654,0.24364529){\makebox(0,0)[lt]{\lineheight{1.25}\smash{\begin{tabular}[t]{l}$S_1$\end{tabular}}}}%
    \put(0.5436877,0.24510326){\makebox(0,0)[lt]{\lineheight{1.25}\smash{\begin{tabular}[t]{l}$S'_1$\end{tabular}}}}%
    \put(0.40818904,0.29261419){\makebox(0,0)[lt]{\lineheight{1.25}\smash{\begin{tabular}[t]{l}$S_2$\end{tabular}}}}%
    \put(0.54095624,0.29247725){\makebox(0,0)[lt]{\lineheight{1.25}\smash{\begin{tabular}[t]{l}$S'_2$\end{tabular}}}}%
    \put(0,0){\includegraphics[width=\unitlength,page=2]{univcoverpicture2.pdf}}%
    \put(0.52720258,0.02041993){\makebox(0,0)[lt]{\lineheight{1.25}\smash{\begin{tabular}[t]{l}$\tau$\end{tabular}}}}%
    \put(0.50334137,0.39114172){\makebox(0,0)[lt]{\lineheight{1.25}\smash{\begin{tabular}[t]{l}$\sigma$\end{tabular}}}}%
    \put(0.8533814,0.38557457){\makebox(0,0)[lt]{\lineheight{1.25}\smash{\begin{tabular}[t]{l}$\tau^2\sigma$\end{tabular}}}}%
    \put(0.1506171,0.39181764){\makebox(0,0)[lt]{\lineheight{1.25}\smash{\begin{tabular}[t]{l}$\tau^{-2}\sigma$\end{tabular}}}}%
    \put(0,0){\includegraphics[width=\unitlength,page=3]{univcoverpicture2.pdf}}%
    \put(0.68620402,0.37894495){\makebox(0,0)[lt]{\lineheight{1.25}\smash{\begin{tabular}[t]{l}$\tau\sigma$\end{tabular}}}}%
    \put(0.34111928,0.38052394){\makebox(0,0)[lt]{\lineheight{1.25}\smash{\begin{tabular}[t]{l}$\tau^{-1}\sigma$\end{tabular}}}}%
    \put(0,0){\includegraphics[width=\unitlength,page=4]{univcoverpicture2.pdf}}%
  \end{picture}%
\endgroup%